\newtheoremstyle{fact}
     {\topsep}
     {\topsep}
     {\slshape}
     {}
     {\bfseries}
     {}
     { }
     {\thmname{#1}\thmnumber{ #2.}\thmnote{ \rm (#3)}}
\newtheoremstyle{mylabel}
     {\topsep}
     {\topsep}
     {\itshape}
     {}
     {\bfseries}
     {}
     { }
     {\thmname{#1}\thmnote{ #3}.}
\newtheorem{theorem}{Theorem}[section]
\newtheorem{Ltheorem}{Theorem}
\newtheorem*{theorem*}{Theorem} 
\newtheorem{lemma}[theorem]{Lemma}
\newtheorem{problem}[theorem]{Problem}
\newtheorem*{conjecture*}{Conjecture}
\theoremstyle{mylabel}
\newtheorem*{Ltheorem*}{Theorem}
\theoremstyle{definition}
\newtheorem{remark}[theorem]{Remark}
\newtheorem*{remark*}{Remark}
\newtheorem*{question*}{Question}
\newtheorem*{examples*}{Examples}  
\newtheorem{example}[theorem]{Example}
\newtheorem*{example*}{Example}
\newtheorem*{convention*}{Convention}
\theoremstyle{fact}
\newtheorem{ftheorem}[theorem]{Theorem}
\newtheorem{flemma}[theorem]{Lemma}
\newenvironment{myromanlist}[1][enumi]{\begin{list}{{\rm (\roman{#1})}}
{\usecounter{#1}\setlength{\labelwidth}{25pt}\setlength{\topsep}{-6pt}
\setlength{\itemsep}{-4pt} \setlength{\leftmargin}{25pt}}}{\end{list}}
\newenvironment{myalphlist}[1][enumi]{\begin{list}{{\rm (\alph{#1})}}
{\usecounter{#1}\setlength{\labelwidth}{25pt}\setlength{\topsep}{-6pt}
\setlength{\itemsep}{-4pt} \setlength{\leftmargin}{25pt}}}{\end{list}}
\def\proofont{\fontseries{bx}\fontshape{sc}\selectfont}
\def\proofname{Proof.}
\newcommand{\Note}[1]{}
\renewenvironment{proof}[1][\proofname]{\par
  \normalfont
  \topsep6\p@\@plus6\p@ \trivlist
  \item[\hskip\labelsep\noindent\proofont #1]\ignorespaces
}{%
  \qed\endtrivlist
}
\titleformat*{\section}{\normalsize\bfseries\centering}
\titleformat*{\subsection}{\normalsize\bfseries\itshape}
\author{D. Dikranjan\thanks{The first author acknowledges the financial 
aid received from grant MTM2009-14409-C02-01.} { }and 
G\'abor Luk\'acs\thanks{The second author gratefully acknowledges the 
generous financial support received from NSERC, which enabled him to do 
this research.}}
\title{On the quasi-component of\\ 
pseudocompact abelian groups
\thanks{2010 Mathematics Subject Classification: 
Primary 22A05, 54D25, 54H11; Secondary 22C05, 54D05, 54D30}}
\begin{document}

\makeatletter
\def\@fnsymbol#1{\ifcase#1\or * \or 1 \or 2  \else\@ctrerr\fi\relax}
\let\mytitle\@title

\chead{\small\itshape D. Dikranjan and G. Luk\'acs / 
On the quasi-component of minimal pseudocompact abelian groups}
\fancyhead[RO,LE]{\small \thepage}
\makeatother

\maketitle

\def\thanks#1{} 

\thispagestyle{fancy}

\begin{abstract} 
In this paper, we describe the relationship between the quasi-component 
$q(G)$ of a (perfectly) minimal pseudocompact abelian group $G$ and the 
quasi-component $q(\widetilde G)$ of its completion. Specifically, we 
characterize the  pairs $(C,A)$ of compact connected abelian groups $C$ 
and subgroups $A$ such that $A \cong  q(G)$ and $C \cong  q(\widetilde G)$. 
As a consequence,we show that for every positive integer $n$  or
$n=\omega$, there exist plenty of  abelian pseudocompact perfectly minimal 
$n\mbox{-}$dimensional groups $G$ such that the quasi-component of~$G$ is 
not dense in the quasi-component of the completion of $G$.
\end{abstract}

\section{Introduction}

\label{sect:intro}

\thispagestyle{empty}

A Tychonoff space $X$ is {\itshape pseudocompact} if every continuous 
real-valued map on $X$ is bounded. If a~topological group
$G$ is pseudocompact, then its 
completion $\widetilde G$ is compact, that is, $G$ is {\itshape 
precompact} (cf.~\cite[1.1]{ComfRoss2}), which allows for the following 
characterization of pseudocompact groups.

\begin{ftheorem}[\cite{ComfRoss2}] 
\label{prel:thm:lcps}
A topological group $G$ is pseudocompact if and 
only if $G$ is precompact and $G_\delta$-dense in $\widetilde G$,
in which case 
$\widetilde G  =  \beta G$.
\end{ftheorem}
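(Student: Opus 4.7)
My plan is to work through closed $G_\delta$-subgroups $K \leq \widetilde G$ --- whose quotients $\widetilde G / K$ are compact metrizable when $\widetilde G$ is compact --- and to identify $\widetilde G$ with $\beta G$ by continuously extending bounded real-valued maps from $G$.

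For the forward direction, I would first establish that a pseudocompact topological group is precompact: embed $G$ into the product of its compatible metrizable quotients $G/N$ (for $N$ in a neighborhood base of the identity in the two-sided uniformity); each $G/N$ is a pseudocompact metric group, hence totally bounded, so $G$ itself is precompact and $\widetilde G$ is compact. For $G_\delta$-density, I would argue by contradiction: if some $x \in \widetilde G \setminus G$ has a $G_\delta$-neighborhood $\bigcap_n U_n$ avoiding $G$, choose symmetric open identity neighborhoods $V_n$ in $\widetilde G$ with $xV_n \subseteq U_n$ and $V_{n+1} V_{n+1} \subseteq V_n$. Then $K := \bigcap_n V_n$ is a closed subgroup of $\widetilde G$ with $\widetilde G / K$ compact metrizable, and the image $\pi(G) \subseteq \widetilde G / K$ misses the coset $xK$. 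Pulling back the reciprocal-distance function $y \mapsto 1/d(y, xK)$ from $\widetilde G / K$ along $\pi|_G$ produces a continuous unbounded map $G \to \mathbb{R}$, contradicting pseudocompactness.

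For the converse, assume $G$ is precompact and $G_\delta$-dense in $\widetilde G$; then $\widetilde G$ is compact Hausdorff. The aim is to show every bounded continuous $f: G \to \mathbb{R}$ extends continuously to $\widetilde G$; the universal property of $\beta G$ then yields $\widetilde G = \beta G$, and pseudocompactness follows, since in $\beta G = \widetilde G$ every zero-set is a $G_\delta$, so $G_\delta$-density forces every zero-set disjoint from $G$ to be empty --- which is equivalent to pseudocompactness of $G$. To build the extension I would approximate $f$ by maps factoring through compact metrizable quotients $\widetilde G / K_n$: for each $n$, produce a closed $G_\delta$-subgroup $K_n \leq \widetilde G$ along whose cosets $f$ oscillates by less than $1/n$ on $G$, and use the $G_\delta$-density of $G$ --- which makes $\pi_n|_G : G \to \widetilde G / K_n$ surjective because every coset $gK_n$ is itself a $G_\delta$-set of $\widetilde G$ --- to descend $f$ to a well-defined continuous $\bar f_n : \widetilde G / K_n \to \mathbb{R}$. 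The extension $\widetilde f$ is then obtained as the uniform limit of $\bar f_n \circ \pi_n$.

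The principal obstacle is producing the subgroups $K_n$: this amounts to promoting the ordinary continuity of $f$ on $G$ to uniform continuity with respect to the two-sided precompact uniformity, and it is precisely here that $G_\delta$-density is indispensable. Without it, one could construct bounded continuous functions on $G$ with strictly positive oscillation at some point of $\widetilde G \setminus G$, obstructing any continuous extension to $\widetilde G$.
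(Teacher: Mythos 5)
The paper does not prove this statement at all---it is quoted from Comfort and Ross with a citation---so your attempt can only be measured against the classical argument. Your forward direction is essentially the standard one and is sound in outline: reduce total boundedness to pseudocompact (hence compact) metrizable images, and refute a nonempty $G_\delta$ missing $G$ by passing to a metrizable quotient $\widetilde G/K$ and pulling back $y\mapsto 1/d(\pi(y),\pi(x))$, which is unbounded because $\pi(G)$ is dense. Two repairable inaccuracies: a general topological group need not embed into a product of its metrizable group quotients $G/N$ (only $\omega$-balanced groups do), so for precompactness you should work with Birkhoff--Kakutani left-invariant pseudometrics, whose metric reflections are pseudocompact metric---hence totally bounded---spaces, rather than with group quotients; and in $\widetilde G$ you should choose the $V_n$ conjugation-invariant (possible since compact groups have small invariant neighborhoods) so that $K=\bigcap_n V_n$ is normal and $\widetilde G/K$ is a compact metrizable \emph{group}.

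The genuine gap is in the converse. Your entire extension scheme hinges on producing, for each $n$, a closed $G_\delta$-subgroup $K_n$ along whose cosets $f$ oscillates by less than $1/n$ on $G$; you correctly identify this as ``the principal obstacle''---it amounts to upgrading continuity of $f$ to uniform continuity for the precompact uniformity---but you never prove it. This is not a detail: it is the actual content of the Comfort--Ross theorem (their uniform-continuity theorem, equivalently the $C^*$-embedding of $G$ in $\widetilde G$), and it is strictly stronger than the surjectivity of $\pi_n|_G$ that you do correctly extract from $G_\delta$-density. Precompactness and density alone do not suffice: on a countable dense subgroup $G$ of $\mathbb{T}$ the characteristic function of $G\cap(x,x')$ with $x,x'\notin G$ is bounded and continuous on $G$ but has positive oscillation at $x$, so no continuous extension to $\widetilde G=\mathbb{T}$ exists. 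Hence some argument that genuinely exploits $G_\delta$-density is indispensable here---for instance, showing that the set where the oscillation of $f$ is at least $\epsilon$ would have to meet a nonempty $G_\delta$ disjoint from $G$, or first establishing pseudocompactness of $G$ via locally finite families of open sets and only then invoking uniform continuity of continuous functions on pseudocompact groups. As written, the converse direction is a plan with its central step missing, not a proof.
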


A Tychonoff space is {\itshape zero-dimensional} if it has a base 
consisting of {\itshape clopen} (open-and-closed)  sets.
For a topological group $G$, let $G_0$ and $q(G)$ denote the connected 
component and the quasi-component of the identity, respectively. For every 
group $G$, the quotient $G/G_0$ is {\itshape hereditarily disconnected} 
(i.e., $(G/G_0)_0$ is trivial),  and $G/q(G)$ is {\itshape totally 
disconnected} (i.e., $q(G/q(G))$ is trivial). While in the class of 
locally compact groups (or spaces) these notions of disconnectedness 
coincide with zero-dimensionality, this is  not the case in general. 
Indeed, both $G/G_0$ and $G/q(G)$ may fail to be zero-dimensional, or  to 
admit a coarser zero-dimensional group topology, even in the presence of 
additional compactness-like properties (cf. \cite[7.7]{ComfvMill2}, 
\cite{Megrel4}, \cite[Theorem~D]{DikGL3}). There is a  close relationship 
between connectedness and disconnectedness properties of pseudocompact 
groups and those of their completions, which is summarized in the next 
theorem.

\begin{ftheorem} \label{prel:thm:connsum}
Let $G$ be a pseudocompact group. Then:

\begin{myalphlist}

\item
{\rm (\cite[1.4]{Dikconpsc})}
$q(G)= q(\widetilde G) \cap G = (\widetilde G)_0 \cap  G$;

\item
{\rm (\cite{TkaDimLP})}
$G$ is zero-dimensional if and only if $\widetilde G$ is zero-dimensional;

\item
{\rm (\cite[1.7]{DikPS0dim})}
$G/q(G)$ is zero-dimensional if and only if $q(G)$ is dense in 
$(\widetilde G)_0=q(\widetilde G)$;

\item
{\rm (\cite[4.11(b)]{ComfGL})}
$G/G_0$ is zero-dimensional if and only if $G_0$ is dense in 
$(\widetilde G)_0=q(\widetilde G)$, in which case
$G_0 = q(G)$.

\end{myalphlist}
\end{ftheorem}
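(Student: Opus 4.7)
Part (a) is the key step, from which parts (c) and (d) will follow by applying (a) and (b) to appropriate quotients of $G$, while (b) will isolate the one genuinely nontrivial ingredient (a theorem of Tkachenko). I use throughout that $\widetilde G$ is compact by Theorem~\ref{prel:thm:lcps}, so $q(\widetilde G) = (\widetilde G)_0$ automatically, since connected and quasi-components coincide in compact groups.

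\textbf{Plan for (a).} By Theorem~\ref{prel:thm:lcps}, $\widetilde G = \beta G$, so every continuous map $G \to \{0,1\}$ extends continuously to $\widetilde G$; equivalently, $U \mapsto \overline{U}^{\widetilde G}$ and $\widetilde V \mapsto \widetilde V \cap G$ are mutually inverse bijections between the clopen subsets of $G$ and those of $\widetilde G$. For a clopen $U \subseteq G$ containing $e$, the extension $\widetilde U$ is clopen in $\widetilde G$ and contains $(\widetilde G)_0$ (as $(\widetilde G)_0$ is connected and meets $\widetilde U$ at $e$), so $U \supseteq (\widetilde G)_0 \cap G$; intersecting over all such $U$ yields $q(G) \supseteq (\widetilde G)_0 \cap G$. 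Conversely, for each clopen neighborhood $\widetilde V$ of $e$ in $\widetilde G$, the trace $\widetilde V \cap G$ is a clopen neighborhood of $e$ in $G$, so $q(G) \subseteq G \cap \bigcap_{\widetilde V} \widetilde V = G \cap q(\widetilde G) = G \cap (\widetilde G)_0$.

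\textbf{Plan for (b).} The ``if'' direction is trivial, as zero-dimensionality passes to subspaces. For ``only if'', if $G$ is zero-dimensional then a clopen base at $e$ gives $q(G) = \{e\}$, so (a) forces $(\widetilde G)_0 \cap G = \{e\}$. The main obstacle is upgrading this to $(\widetilde G)_0 = \{e\}$: one would like to use that the $G_\delta$-density of $G$ in $\widetilde G$ rules out a nontrivial closed subgroup of $\widetilde G$ meeting $G$ only at $e$, but $(\widetilde G)_0$ need not be a $G_\delta$ in $\widetilde G$, so a direct density argument fails. I would invoke Tkachenko's theorem, which bridges exactly this gap.

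\textbf{Plan for (c) and (d).} Both reduce to (b) via quotients. For any closed normal subgroup $N \leq G$, the pseudocompact quotient $G/N$ embeds densely in the compact group $\widetilde G / \overline{N}^{\widetilde G}$, which is therefore its completion. By (b), $G/N$ is zero-dimensional iff $(\widetilde G / \overline{N}^{\widetilde G})_0 = \{e\}$, i.e., iff $(\widetilde G)_0 \subseteq \overline{N}^{\widetilde G}$. With $N = q(G)$, combining this with $q(G) \subseteq (\widetilde G)_0$ from (a) yields $\overline{q(G)}^{\widetilde G} = (\widetilde G)_0$, proving (c). With $N = G_0$, the same argument gives the first equivalence in (d); for the final claim $G_0 = q(G)$, use that $G_0$ is closed in $G$, so $G_0 = G \cap \overline{G_0}^{\widetilde G} = G \cap (\widetilde G)_0 = q(G)$ by (a).
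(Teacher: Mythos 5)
This is one of the paper's quoted background facts: all four parts carry external citations and the paper gives no proof of its own, so there is no internal argument to compare yours against; I can only judge the proposal on its merits. Where it actually argues, it is essentially sound. Part (a) is the standard $\beta G$ argument and is correct: the bijection between clopen subsets of $G$ and of $\widetilde G=\beta G$, together with $q(\widetilde G)=(\widetilde G)_0$ by compactness, gives $q(G)=q(\widetilde G)\cap G$ exactly as you say. The reductions of (c) and (d) to (b) via the quotient $G/N\hookrightarrow \widetilde G/\overline{N}^{\widetilde G}$ are also correct, as is the closing step $G_0=G\cap\overline{G_0}^{\widetilde G}=G\cap(\widetilde G)_0=q(G)$. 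Two ingredients are used silently and deserve a sentence each: first, that the natural map $G/N\to\widetilde G/\overline{N}^{\widetilde G}$ is a \emph{topological} isomorphism onto a dense subgroup --- this holds because $N$ is dense in its closure, but it is not automatic for arbitrary dense subgroups; second, the identification $(\widetilde G/\overline{N})_0=(\widetilde G)_0\overline{N}/\overline{N}$ for compact $\widetilde G$ (plus the fact that a compact group is zero-dimensional if and only if its identity component is trivial), which is what converts ``the quotient has trivial component'' into ``$(\widetilde G)_0\subseteq\overline{N}$''.

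The genuine gap is part (b). You correctly isolate the hard direction --- upgrading $(\widetilde G)_0\cap G=\{e\}$ to $(\widetilde G)_0=\{e\}$ --- and correctly note that a naive $G_\delta$-density argument fails because $(\widetilde G)_0$ need not be a $G_\delta$. But ``I would invoke Tkachenko's theorem'' is not a proof: statement (b) \emph{is} the theorem being cited, so at that point you are assuming the result you are meant to establish. That some genuinely new idea is required is shown by an infinite cyclic dense subgroup of $\mathbb{T}$: it is countable and metrizable, hence zero-dimensional, while its completion $\mathbb{T}$ is connected, so precompactness alone cannot suffice and pseudocompactness must be exploited in an essential way. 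Since the paper itself only cites the reference for (b), stopping there is defensible, but you should state explicitly that (b) is being quoted rather than derived, and that your proofs of (c) and (d) therefore inherit that dependence.
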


We say that a pair $(C,A)$ of a group $C$ and its subgroup $A$ is 
{\itshape realized} by a group $G$ if $A \cong q(G)$ and $C \cong 
q(\widetilde G)$. In a previous paper, the authors obtained sufficient 
conditions for a pair to be realizable by pseudocompact abelian groups 
that satisfy various degrees of so-called minimality properties 
(cf.~\cite[Theorem D$'$]{DikGL3}). In this paper, we obtain a complete 
characterization (i.e., necessary and sufficient conditions) for the same.

Recall that a (Hausdorff) topological group $G$ is {\itshape minimal} if 
there is no coarser (Hausdorff) group topology on $G$ (cf.~\cite{Steph} 
and~\cite{Doi}). For a pseudocompact group $G$, the quotient $G/q(G)$ is 
totally disconnected and pseudocompact, and thus, by an unpublished result 
of Shakhmatov, admits a coarser zero-dimensional group topology 
(cf.~\cite[Theorem~B]{DikGL3}). Therefore, Shakhmatov's result implies 
that for a pseudocompact group $G$, if $G/q(G)$ is minimal, then it is 
zero-dimensional. If in addition $G$ is minimal and abelian, then the 
converse is also true.

\begin{ftheorem}[{\cite[Theorem~3]{Dikdimpsc}, \cite[1.7]{DikPS0dim}}] 
\label{prel:thm:qG}
Let $G$ be a minimal pseudocompact abelian group. Then the following 
statements are equivalent:

\begin{myromanlist}

\item
$G/q(G)$ is zero-dimensional;

\item
$q(G)$ is dense in $(\widetilde G)_0=q(\widetilde G)$;

\item
$G/q(G)$ is minimal.

\end{myromanlist}
\end{ftheorem}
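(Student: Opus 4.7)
The equivalence (i) $\Leftrightarrow$ (ii) is Theorem~\ref{prel:thm:connsum}(c), and the implication (iii) $\Rightarrow$ (i) was already observed in the discussion preceding the theorem: since $G/q(G)$ is totally disconnected and pseudocompact, Shakhmatov's result provides a coarser zero-dimensional group topology on $G/q(G)$, which must coincide with the original whenever $G/q(G)$ is minimal. Only (ii) $\Rightarrow$ (iii) requires real work, and this is where the hypotheses that $G$ is minimal and abelian enter.

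Assume (ii). Since $G$ is precompact with completion $\widetilde G$ and $\overline{q(G)}^{\widetilde G} = \widetilde G_0$, the completion of $G/q(G)$ equals the compact profinite abelian group $K := \widetilde G/\widetilde G_0$, into which $G/q(G)$ is densely embedded. By the Prodanov--Stephenson minimality criterion for dense subgroups of compact abelian groups, $G/q(G)$ is minimal precisely when it is \emph{essential} in $K$, meaning that every nontrivial closed subgroup of $K$ meets $G/q(G)$ nontrivially. The same criterion, applied to $G$ itself, yields that $G$ is essential in $\widetilde G$. Equivalently, taking preimages under $\pi : \widetilde G \to K$, the task reduces to showing that for every closed subgroup $L \leq \widetilde G$ strictly containing $\widetilde G_0$, there is an element of $G$ lying in $L \setminus \widetilde G_0$.

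My strategy is to locate a nontrivial closed subgroup $F \leq L$ with $F \cap \widetilde G_0 = \{0\}$; the essentiality of $G$ in $\widetilde G$ then supplies a nonzero $g \in F \cap G$, and by construction $g$ lies in $L \cap G$ but not in $\widetilde G_0$. Writing $N := L/\widetilde G_0$, a nontrivial closed subgroup of $K$, suppose first that $N$ contains an element $\bar a$ of prime order~$p$. Lift it to $a \in L$ with $pa \in \widetilde G_0$; the divisibility of the compact connected group $\widetilde G_0$ produces some $b \in \widetilde G_0$ with $pb = pa$, whence $c := a - b$ has order~$p$, lies in $L$, and does not lie in $\widetilde G_0$. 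Taking $F := \langle c \rangle$ settles this case.

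The main obstacle is the torsion-free case, where $N$ contains no nontrivial element of finite order. Here $N$ is a torsion-free profinite abelian group, and standard structure theory yields that any such nontrivial $N$ contains a closed topological copy of $\Zp$ for some prime~$p$. Let $L' \leq L$ be the preimage of one such copy under $\pi$. Pontryagin-dualizing the short exact sequence $0 \to \widetilde G_0 \to L' \to \Zp \to 0$ gives $0 \to \Z(p^\infty) \to \widehat{L'} \to \widehat{\widetilde G_0} \to 0$ in discrete abelian groups; since the Pr\"ufer group $\Z(p^\infty)$ is divisible and hence injective in the category of abelian groups, this sequence splits. Dualizing back provides a topological direct-sum decomposition $L' \cong \widetilde G_0 \oplus \Zp$, and the $\Zp$-summand serves as the desired $F$. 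This splitting argument, which exploits the divisibility of $\widetilde G_0$, is the key step that bypasses the absence of torsion in~$N$.
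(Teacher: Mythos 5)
The paper never proves this theorem---it is quoted as a known result from the cited references---so your argument can only be judged on its own terms, and on those terms it is correct. Your reductions are the right ones: (i)$\Leftrightarrow$(ii) is exactly Theorem~\ref{prel:thm:connsum}(c), and (iii)$\Rightarrow$(i) is the Shakhmatov argument already recorded in the introduction. For (ii)$\Rightarrow$(iii), the identification of the completion of $G/q(G)$ with $\widetilde G/\widetilde G_0$ deserves one more sentence: the canonical map $G/q(G)\to\widetilde G/\widetilde G_0$ is a dense embedding because $q(G)=G\cap \widetilde G_0$ is closed in $G$ and dense in $\widetilde G_0$---the latter being precisely hypothesis (ii), without which the map need not be open onto its image. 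Granting that, the translation of minimality of $G/q(G)$ into essentiality in $\widetilde G/\widetilde G_0$ via Theorems~\ref{thm:min-crit} and~\ref{thm:min-ab}, and the further reduction to finding a nontrivial closed $F\leq L$ with $F\cap\widetilde G_0=\{0\}$, are both sound. Your two cases work: the prime-order case correctly exploits divisibility of the compact connected group $\widetilde G_0$, and in the torsion-free case a nontrivial torsion-free profinite abelian group does contain a closed copy of $\Zp$ (the closure of a nontrivial cyclic subgroup is procyclic, hence a product of groups $\Zp$ and finite cyclic groups, and torsion-freeness eliminates the finite factors), after which the dual sequence splits because $\widehat{\Zp}\cong\mathbb{Z}(p^\infty)$ is divisible, hence injective. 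The only slip is in your closing sentence: the splitting is powered by the injectivity of $\mathbb{Z}(p^\infty)$, not by the divisibility of $\widetilde G_0$ (whose dual is torsion-free but generally not projective); divisibility of $\widetilde G_0$ is what carries the prime-order case. This is a cosmetic misattribution, not a gap.
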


A group $G$ is {\itshape totally minimal} if every (Hausdorff) quotient of 
$G$ is totally minimal (cf.~\cite{DikPro}); $G$~is {\itshape perfectly} 
({\itshape totally}) {\itshape minimal} if the product $G \times H$ is 
({\itshape totally}) {\itshape minimal} for every ({\itshape totally}) 
{\itshape minimal} group $H$ (cf.~\cite{Stoy}). In an earlier paper, the 
authors proved the following:

\begin{ftheorem}[{\cite[Theorem~D$'$]{DikGL3}}] \label{prel:thm:D'}
Let $C$ be a connected compact abelian group, and $A$ a subgroup of~$C$.
Then there exists a pseudocompact abelian group $G$ such that 
$A \cong q(G)$ and $C \cong (\widetilde G)_0 = q(\widetilde G)$, and 
in particular, $\dim G =  \dim C$. Furthermore, if $A$ is dense in $C$ and

\begin{myalphlist}

\item
$A$ is minimal, then $G$ may be chosen to be  minimal;

\item
$A$ is totally minimal, then $G$ may be chosen to be totally 
minimal;

\item
$A$ is perfectly minimal, then $G$ may be chosen to be 
perfectly minimal;

\item
$A$ is perfectly totally minimal, then $G$ may be chosen to 
be perfectly totally minimal.

\end{myalphlist}
\end{ftheorem}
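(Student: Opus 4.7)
The plan is to realize $(C,A)$ by constructing $G$ as a $G_\delta$-dense subgroup of $C \times K$, where $K$ is a sufficiently large compact zero-dimensional abelian group that will serve as the ``zero-dimensional part'' of $\widetilde G$. Since $K$ is zero-dimensional we have $(C \times K)_0 = C \times \{0\}$, and Theorem~\ref{prel:thm:connsum}(a) then yields $q(G) = G \cap (C \times \{0\})$ for any pseudocompact subgroup $G \leq C \times K$ with $\widetilde G = C \times K$. So, identifying $A$ with $A \times \{0\}$, the core task is to produce a $G_\delta$-dense subgroup $G \leq C \times K$ with $G \cap (C \times \{0\}) = A$. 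Theorem~\ref{prel:thm:lcps} then gives pseudocompactness for free, and the dimension statement follows from $\dim(C \times K) = \dim C$.

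I would build $G$ by a standard transfinite recursion, enumerating the nonempty closed $G_\delta$-subsets of $C \times K$ as $\{B_\alpha\}_{\alpha < \kappa}$ and, starting from $G_0 := A$, choosing at each stage an element $g_\alpha \in B_\alpha$ such that $G_{\alpha+1} := \langle G_\alpha, g_\alpha\rangle$ still intersects $C \times \{0\}$ only in $A$. The elements of $B_\alpha$ that violate this property lie in a union of at most $|G_\alpha|\cdot\aleph_0$ cosets of proper closed subgroups of $C \times K$, so a valid $g_\alpha$ always exists provided $K$ is chosen so that this forbidden set is strictly dominated by $|B_\alpha|$. Taking $K = \prod_{p} \Zp^\tau$ with $\tau$ large enough relative to $w(C)$ and $|A|$ achieves this. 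Setting $G := \bigcup_\alpha G_\alpha$ then yields the basic realization.

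For the minimality variants, I would invoke the Prodanov--Stoyanov essentiality criterion: a dense subgroup of a compact abelian group is minimal iff it meets every nontrivial closed subgroup nontrivially. Assuming $A$ dense and essential in $C$, I would dovetail into the recursion an enumeration of the closed nontrivial subgroups $N \leq C \times K$; for $N$ meeting $C \times \{0\}$ nontrivially, essentiality of $A$ in $C$ already supplies a witness inside $G$, while for $N$ with $N \cap (C \times \{0\}) = \{0\}$, the scheduled stage adds an element of $N$ without breaking the intersection condition, precisely because $N$ avoids $C \times \{0\}$. Parts (b)--(d) follow from the analogous essentiality-type characterizations of total and perfect (total) minimality of dense subgroups.

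The main obstacle is balancing the simultaneous constraints during the recursion: $G_\delta$-density (one element must be added per $G_\delta$-set), the intersection condition $G \cap (C \times \{0\}) = A$ (each inclusion must avoid certain cosets), and, in the minimality cases, essentiality (each nontrivial closed subgroup must be hit). All three ``demands'' must be quantitatively dominated by the available selection pool at every stage, which forces a careful choice of the auxiliary group $K$ and a careful interleaving of the enumerations; the delicacy increases noticeably as one passes from (a) to (d), where commutativity with products against ever larger classes of (totally) minimal partners must be ensured.
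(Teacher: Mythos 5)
Your first step (a $G_\delta$-dense $G\leq C\times K$ with $G\cap(C\times\{0\})=A$, giving the unadorned realization) is workable, though the counting is stated with the wrong comparison: the forbidden set is a union of at most $|G_\alpha|\cdot\aleph_0$ cosets of $C\times\{0\}$, each of cardinality $|C|$, so its raw cardinality need not be smaller than $|B_\alpha|$; what saves you is that $B_\alpha$ contains a coset of a closed $G_\delta$-subgroup $N$ whose projection $\pi_K(N)$ has $2^{w(K)}$ elements, while the forbidden cosets project to only $|G_\alpha|\cdot\aleph_0$ points of $K$. The genuine gap is in the minimality clauses. Minimality of the dense subgroup $G$ of $C\times K$ means $G\leq_e C\times K$ (Theorem~\ref{thm:min-ab}), so $G$ must meet \emph{every} nontrivial closed subgroup --- in particular the $2^{w(K)}$ pairwise almost disjoint copies of $\mathbb{Z}_p$ inside $\{0\}\times K$, and the ``diagonal'' copies meeting both $C\times\{0\}$ and $\{0\}\times K$ trivially. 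Hitting all of these forces $|G|\geq 2^{w(K)}$, at which point the cardinal bookkeeping protecting $G\cap(C\times\{0\})=A$ collapses: for a thin target $N\cong\mathbb{Z}_p$ the candidate pool has only $2^\omega$ elements, while the forbidden set is indexed by $|G_\alpha|\cdot\aleph_0$, which eventually exceeds it. Your justification ``precisely because $N$ avoids $C\times\{0\}$'' only controls the interaction of the new element with itself ($ng\in C\times\{0\}$ forces $ng=0$); it says nothing about $ng+x$ for previously adjoined $x$ with $\pi_K(x)\neq 0$, which is where new, unwanted elements of $(C\times\{0\})\setminus A$ arise. Parts (b)--(d) are worse still, since total minimality requires $G\cap N$ \emph{dense} in $N$ for every closed $N$, and perfect minimality is not an essentiality condition inside $\widetilde G$ alone.

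The intended argument (given in this paper for the stronger Theorem~\ref{main:thm:realize}$'$, and in \cite{DikGL3} for the present statement) sidesteps the recursion entirely. One takes a pseudocompact, zero-dimensional, perfectly totally minimal group $H$ with $r_0(\widetilde H/H)\geq 2^{w(C)}$ (Lemma~\ref{lemma:kernel}), builds a surjection $h\colon\widetilde H\to C$ with $H\subseteq\ker h$ using divisibility of $C$, and sets $G:=\Gamma_h+(\{0\}\times A)$ inside $\widetilde H\times C$. The graph $\Gamma_h$ is $G_\delta$-dense (Lemma~\ref{lemma:graph}) and meets $\{0\}\times C$ trivially, so $G\cap q(\widetilde G)=\{0\}\times A$ exactly, with no element-by-element avoidance needed; and minimality is obtained wholesale, not pointwise, from $G\supseteq H\times A$ together with the product--essentiality machinery (Lemmas~\ref{ess:lemma:ewcomp-prod} and~\ref{ess:lemma:prodMin}), since $H$ is a prefabricated essential subgroup of $\widetilde H$ commuting well with products. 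If you want to rescue your construction, you would need to replace the per-subgroup recursion for essentiality by the inclusion of such a structured essential subgroup from the outset --- which is, in effect, the published proof.
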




By Theorem~\ref{prel:thm:qG}, if $G$ is a totally minimal pseudocompact 
abelian group, then $q(G)$ is dense in $(\widetilde G)_0 = 
q(\widetilde{G})$. Thus, the condition that $A$ is dense in $C$ is not 
only sufficient, but also necessary in parts (b) and (d) of 
Theorem~\ref{prel:thm:D'}. 

In this paper, we show that in order to realize $(C,A)$ by a~(perfectly) 
minimal pseudocompact abelian group, $A$~need not be dense in $C$, but a 
milder condition is both sufficient and necessary. A subgroup $E$ of a 
topological group $G$ is {\itshape essential}, and we put $E \leq_e G$, if 
for every non-trivial closed normal subgroup $N$ of $G$, the intersection 
$E \cap N$ is non-trivial.

\begin{Ltheorem}  \label{main:thm:realize}
Let $C$ be a compact connected abelian group, and $A$ a subgroup. Then:

\begin{myalphlist}

\item
$(C,A)$ can be realized by a minimal pseudocompact abelian group if and 
only if $A   \leq_e   C$;

\item
$(C,A)$ can be realized by a perfectly minimal pseudocompact abelian group 
if and only if $A$ is perfectly minimal and  
$A   \leq_e   C$;

\item
$(C,A)$ can be realized by a (perfectly) totally minimal pseudocompact 
abelian group  if and only if $A$ is (perfectly) totally minimal and dense 
in $C$.

\end{myalphlist}
\end{Ltheorem}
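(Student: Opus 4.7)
Assume $G$ is a minimal pseudocompact abelian group realizing $(C,A)$. The abelian Prodanov--Stoyanov theorem yields $G \leq_e \widetilde G$, and in the abelian setting essentiality descends to closed subgroups: if $L$ is closed in $\widetilde G$, any non-trivial closed subgroup of $L$ is closed in $\widetilde G$, hence meets $G$. Applying this with $L := C = q(\widetilde G)$ and using $A = q(G) = G \cap C$ from Theorem~\ref{prel:thm:connsum}(a) gives $A \leq_e C$. For~(b), perfect minimality of $G$ upgrades this to $G \times H \leq_e \widetilde G \times \widetilde H$ for every minimal abelian $H$; restricting essentiality to the closed subgroup $\overline A \times \widetilde H$ and noting $G \cap \overline A = A$ (since $\overline A \subseteq C$) yields $A \times H \leq_e \widetilde{A \times H}$, so $A$ is perfectly minimal. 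For~(c), total minimality of $G$ passes to $G/q(G)$, forcing $A$ dense in $C$ by Theorem~\ref{prel:thm:qG}; the analogous closed-subgroup argument, run in product form for the perfect variant, transfers (perfect) total minimality from $G$ to $A$.

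\textbf{Sufficiency.} The dense case is exactly Theorem~\ref{prel:thm:D'}; since density is also necessary in~(c), this settles~(c) entirely. For~(a), note that $A \leq_e C$ already forces $A \leq_e \overline A$, so $A$ is automatically minimal and no separate hypothesis is needed. The substantive new work concerns~(a) and~(b) when $B := \overline A \subsetneq C$. The plan is to apply Theorem~\ref{prel:thm:D'} to the dense pair $(B,A)$ to obtain a (perfectly) minimal pseudocompact abelian $G_0$ with $q(G_0) = A$ and $(\widetilde{G_0})_0 = B$, and then to enlarge $G_0$ so that the connected component of the completion grows from $B$ up to $C$. The natural ambient compact group is the pushout $P := (C \oplus \widetilde{G_0})/\{(b,-b) : b \in B\}$: this is compact abelian with $P_0 \cong C$ and $P/C \cong \widetilde{G_0}/B$ totally disconnected, and the canonical embedding $G_0 \hookrightarrow P$ already satisfies $G_0 \cap C = A$ inside $P$. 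One then augments $G_0$ in $P$ by a transversal subgroup chosen to be $G_\delta$-dense and essential in the ``new'' direction $P/\widetilde{G_0} \cong C/B$, producing the sought-after $G$.

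\textbf{Main obstacle.} The delicate step is the augmentation: $G$ must be simultaneously $G_\delta$-dense in $P$ (pseudocompactness), essential in $P$ (minimality), and satisfy $G \cap C = A$ exactly. The last requirement rules out adjoining elements of $C$ directly, while $A$ itself may be far from $G_\delta$-dense in $C$, so the density in the $C$-direction must be manufactured using elements of $P \setminus (\widetilde{G_0} \cup C)$, i.e.\ by an ``off-diagonal'' choice controlled modulo $\widetilde{G_0}$. For~(b), perfect minimality of $A$ must further be propagated to $G$; the natural route is to build the augmentation itself as a (perfectly) minimal pseudocompact group on the $C/B$-side and invoke the product-essentiality principle from the necessity direction to conclude that the combined group $G$ is (perfectly) minimal in $P$.
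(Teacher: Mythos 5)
Your necessity argument is sound and essentially the paper's: $G \leq_e \widetilde G$ by the Prodanov--Stoyanov criterion (Theorem~\ref{thm:min-ab}), essentiality passes to the intersection with the closed subgroup $q(\widetilde G)$ by Lemma~\ref{ess:lemma:ab-cl}(b), and $q(G) = G \cap q(\widetilde G)$ gives $A \leq_e C$; the (perfect, total) minimality of $A$ and the density in case (c) follow as you say. The sufficiency direction, however, has a genuine gap --- two, in fact. First, your plan begins by applying Theorem~\ref{prel:thm:D'} to the dense pair $(B,A)$ with $B = \overline A$, but Theorem~\ref{prel:thm:D'} requires the ambient compact group to be \emph{connected}, and $\overline A$ need not be connected. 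Indeed, in the very examples motivating the theorem (Theorem~\ref{main:thm:const}), $A = \mathbb{Z}^\perp \cong \prod_{p} \mathbb{Z}_p$ is a compact zero-dimensional essential subgroup of $\widehat{\mathbb{Q}}$, so $B = A$ is totally disconnected and the cited theorem simply does not apply to $(B,A)$. Second, and more importantly, the ``augmentation'' step --- producing a subgroup of your pushout $P$ that is simultaneously $G_\delta$-dense, essential, and meets $C$ in exactly $A$ --- is precisely the substantive content of the theorem, and you explicitly leave it as an open obstacle rather than carrying it out. As written, the sufficiency half is a plan, not a proof.

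For comparison, the paper's construction sidesteps both issues: it takes a zero-dimensional perfectly totally minimal pseudocompact group $H$ with $r_0(\widetilde H / H) \geq 2^{w(C)}$ (Lemma~\ref{lemma:kernel}), builds a surjective homomorphism $h\colon \widetilde H \rightarrow C$ vanishing on $H$ (using divisibility of the connected group $C$), and sets $G := \Gamma_h + (\{0\} \times A)$ inside $\widetilde H \times C$. The graph $\Gamma_h$ supplies the $G_\delta$-density (Lemma~\ref{lemma:graph}), zero-dimensionality of $\widetilde H$ forces $q(\widetilde G) = \{0\} \times C$ and hence $q(G) = \{0\} \times A$, and minimality follows because $G$ contains $H \times A$, which is essential in $\widetilde H \times C$ by Lemma~\ref{ess:lemma:prodMin}(b). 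If you wish to salvage the pushout approach, you would still need an analogue of this graph/essentiality mechanism to manufacture $G_\delta$-density in the $C/B$-direction without adjoining points of $C$ beyond $A$.
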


\begin{remark}
If $A$ is an essential subgroup of a compact group $C$, then in 
particular, it is essential in its completion $\widetilde A \subseteq C$, 
and thus $A$ is minimal (see Lemma~\ref{ess:lemma:ab-cl}(a) and 
Theorem~\ref{thm:min-crit}). Therefore, the condition $A \leq_e C$ in 
Theorem~\ref{main:thm:realize}(a) implies that $A$ is minimal.
\end{remark}

By applying Theorem~\ref{main:thm:realize} to the character group 
$\hom(\mathbb{Q},\mathbb{R}/\mathbb{Z})$ equipped with the topology of 
pointwise convergence and the annihilator of $\mathbb{Z}$ in it, we 
also obtain a family of ``pathological" examples.

\begin{Ltheorem} \label{main:thm:const}
For every positive integer $n$ or 
$n=\omega$,
there exists  an abelian pseudocompact perfectly minimal group $G_n$ with 
$\dim G_n  =  n$ such that $q(G_n)$ is 
not dense in $q(\widetilde G_n)=(\widetilde G_n)_0$, or 
equivalently, $G_n/q(G_n)$ is not minimal.
\end{Ltheorem}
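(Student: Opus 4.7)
The plan is to apply Theorem~\ref{main:thm:realize}(b) to a family of concrete pairs $(C_n, A_n)$ built from a single one-dimensional example; the key work is the essentiality verification in that example.

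Take $C_1 := \hom(\Q, \T)$ equipped with the topology of pointwise convergence, and let $A_1 := \{f \in C_1 : f(1) = 0\}$ be the annihilator of $\Z \leq \Q$ in $C_1$. Then $C_1$ is the Pontryagin dual of the discrete torsion-free divisible group $\Q$, hence a compact connected abelian group of dimension~$1$. The subgroup $A_1$ is closed in $C_1$ and isomorphic to $\hom(\Q/\Z, \T) \cong \widehat{\Z}$, so it is compact and in particular perfectly minimal. Since its annihilator $\Z$ in $\widehat{C_1} = \Q$ is nontrivial, $A_1$ is a proper closed subgroup of $C_1$ and hence not dense.

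To see $A_1 \leq_e C_1$, use Pontryagin duality: closed subgroups $N \leq C_1$ correspond to subgroups $H = N^\perp \leq \Q$, and $A_1 \cap N$ is trivial if and only if $\Z + H = \Q$. If some proper $H \lneq \Q$ satisfied $\Z + H = \Q$, then the composition $\Z \hookrightarrow \Q \twoheadrightarrow \Q/H$ would be surjective, making $\Q/H$ cyclic; but $\Q/H$ is divisible as a quotient of $\Q$, and the only cyclic divisible abelian group is trivial. Hence $H = \Q$, a contradiction.

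For each $1 \leq n \leq \omega$, set $C_n := C_1 \times \T^{n-1}$ and $A_n := A_1 \times \T^{n-1}$ (with the convention $\T^0 = 0$ and $\T^\omega$ the countable power). Then $C_n$ is compact connected abelian with $\dim C_n = n$, and $A_n$ is compact (hence perfectly minimal) and not dense in $C_n$. The essentiality $A_n \leq_e C_n$ follows from the general fact that if $A \leq_e C$ and $K$ is compact, then $A \times K \leq_e C \times K$: given a non-trivial closed subgroup $N \leq C \times K$, either $N \cap (\{0\} \times K) \neq 0$ and this intersection already lies in $A \times K$, or $\pi_C|_N$ is injective with closed image $\pi_C(N) \leq C$ (closed because compactness of $K$ makes $\pi_C$ a closed map), in which case a preimage under $\pi_C|_N$ of any non-zero element of $A \cap \pi_C(N)$ lies in $N \cap (A \times K)$. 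Applying Theorem~\ref{main:thm:realize}(b) to $(C_n, A_n)$ then yields a perfectly minimal pseudocompact abelian group $G_n$ with $q(G_n) \cong A_n$, $q(\widetilde{G_n}) \cong C_n$, and $\dim G_n = n$; since $A_n$ is not dense in $C_n$, $q(G_n)$ is not dense in $q(\widetilde{G_n}) = (\widetilde{G_n})_0$, which by Theorem~\ref{prel:thm:qG} is equivalent to $G_n/q(G_n)$ being non-minimal. The main obstacle is the essentiality check for the pair $(C_1, A_1)$; the rest is standard Pontryagin duality and product bookkeeping.
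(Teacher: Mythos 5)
Your proposal is correct and follows essentially the same route as the paper: both hinge on the pair $(\widehat{\Q},\Z^\perp)$, verify essentiality, and feed suitable products of this pair into Theorem~\ref{main:thm:realize} (in the paper, its refined form Theorem~\ref{main:thm:realize}$'$, which is also what supplies the equality $\dim G_n=\dim C_n$ that you use). The only differences are cosmetic: you take $C_n=\widehat{\Q}\times\T^{n-1}$ where the paper takes $(\widehat{\Q})^n$ for finite $n$, and you prove the essentiality $\Z^\perp\leq_e\widehat{\Q}$ by duality rather than citing it; your product-essentiality argument is exactly the compact case of Lemma~\ref{ess:lemma:ewcomp-prod}.
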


The proofs of Theorems~\ref{main:thm:realize} and~\ref{main:thm:const} are 
presented in \S\ref{sect:proofs}; they are based on preservation 
properties of essential subgroups established in \S\ref{sect:ess}.

\section{Preliminaries: Essential and minimal subgroups}

\label{sect:ess}

While preservation of minimality under formation of closed subgroups and 
products has been thoroughly studied (cf.~\cite[Proposition~2.3, 
Lemma~3.1]{Prod2}, \cite[Th\'eor\`eme 1-2]{Doi}, \cite[9]{Steph2}, and 
\cite[(6), (3)]{EDS}), it appears that preservation of essentiality has 
not been well investigated in the context of topological groups. Our aim 
in this section is to remedy this state of affairs in the realm 
of abelian groups.

The relationship between minimality and essential subgroups was 
discovered independently by Stephenson and Prodanov, and generalized by 
Banaschewski (cf.~\cite[Theorem~2]{Steph}, \cite{Prod1}, and 
\cite[Propositions~1~and~2]{Banasch}).

\begin{ftheorem}[{\cite[2.5.1]{DikProSto}}, {\cite[3.21]{GLCLTG}}]
\label{thm:min-crit}
Let $G$ be a topological group, and $D$ a dense subgroup. Then $D$ is 
minimal if and only if $G$ is minimal and 
$D \leq_e  G$.
\end{ftheorem}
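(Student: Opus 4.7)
The strategy is to establish each implication by exploiting the duality between coarser Hausdorff group topologies on $D$ and closed subgroups of $G$; the principal tools are Merson's lemma (two group topologies that agree on a dense subgroup coincide) and the universal property of the Raikov completion.

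For the forward direction, suppose $D$ is minimal. To show $D \leq_e G$, I plan to argue by contradiction: if $N$ is a non-trivial closed normal subgroup of $G$ with $N \cap D = \{0\}$, then the restricted quotient $p|_D\colon D \to G/N$ is injective, and pulling back the topology of $G/N$ along it yields a coarser Hausdorff group topology on $D$. By minimality of $D$ this must coincide with $\tau_G|_D$, so $p|_D$ is a topological embedding. Picking a non-zero $n \in N$ and a net $(d_\alpha)$ in $D$ converging to $n$ in $G$ (by density), continuity of $p$ gives $p(d_\alpha) \to 0$ in $G/N$; the embedding property returns this to $d_\alpha \to 0$ in $D$, and hence in $G$, so Hausdorffness forces $n = 0$, a contradiction. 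To see that $G$ itself is minimal, I would take any coarser Hausdorff group topology $\sigma$ on $G$, observe that $\sigma|_D$ is a coarser Hausdorff group topology on $D$ and hence equals $\tau_G|_D$ by minimality of $D$, and then invoke Merson's lemma to propagate the equality from $D$ to all of $G$.

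For the backward direction, assume $G$ is minimal with $D$ dense and $D \leq_e G$, and let $\sigma$ be a coarser Hausdorff group topology on $D$. The plan is to manufacture a continuous embedding of $G$ into the Raikov completion $\hat D$ of $(D, \sigma)$. Since $\sigma$ is coarser than $\tau_G|_D$, the canonical map $\iota\colon (D, \tau_G|_D) \to \hat D$ is a continuous group homomorphism, hence uniformly continuous, and since $D$ is dense in $\widetilde G$ while $\hat D$ is complete, $\iota$ extends to a continuous homomorphism $\widetilde f\colon \widetilde G \to \hat D$. Let $f \coloneqq \widetilde f|_G$; by construction $f|_D = \iota$ is injective, so $\ker f$ is a closed subgroup of $G$ meeting $D$ trivially, and essentiality $D \leq_e G$ forces $\ker f = \{0\}$. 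Thus $f$ is a continuous injective homomorphism, and pulling back the topology of $\hat D$ along it produces a coarser Hausdorff group topology on $G$, which minimality of $G$ collapses back to $\tau_G$; so $f$ is a topological embedding. Restricting to $D$, this is the assertion $\sigma = \tau_G|_D$, so $D$ is minimal.

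The main obstacle lies in the backward direction, where one must recognise that the two hypotheses play distinct and complementary roles: essentiality is exactly what promotes ``$\ker f \cap D = \{0\}$'' to ``$\ker f = \{0\}$'', while minimality of $G$ is exactly what upgrades the continuous injection $f$ to a topological embedding. If either hypothesis were dropped, the argument would collapse at the corresponding step. A subsidiary technical point to verify is that the extension $\widetilde f$ exists: this rests on $D$ being dense in $\widetilde G$ (inherited from density in $G$) and on uniform continuity being automatic for continuous group homomorphisms with respect to the two-sided group uniformities.
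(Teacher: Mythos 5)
The paper does not prove this statement at all: it is quoted as a known fact (the Stephenson--Prodanov--Banaschewski ``minimality criterion'') with references to \cite{DikProSto} and \cite{GLCLTG}, so there is no internal proof to compare against. Judged on its own, your argument is correct and is essentially the classical one, with two mild variations. In the forward direction your treatment of essentiality (pull back the topology of $G/N$ along the injective $p|_D$, use minimality of $D$ to make $p|_D$ an embedding, then a net argument) is a sound variant of the usual proof, which instead considers the non-Hausdorff topology $\tau_N$ on all of $G$ with basic identity neighbourhoods $UN$ and applies Merson's lemma a second time to reach the contradiction; your net argument avoids that second application at no cost. One small imprecision: Merson's lemma is not literally ``two group topologies agreeing on a dense subgroup coincide'' --- it also requires the induced topologies on the coset space $G/D$ to agree --- but in your application $\sigma\subseteq\tau$ and $D$ is $\tau$-dense, hence $\sigma$-dense, so both coset topologies are indiscrete and the lemma applies as you use it. In the backward direction, your route through the Ra\u{\i}kov completion $\hat D$ of $(D,\sigma)$ and the uniformly continuous extension of the identity map is a clean alternative to the textbook construction, which builds a coarser group topology on $G$ directly from $\sigma$ by taking $\tau$-closures of $\sigma$-neighbourhoods of the identity and uses essentiality to show it is Hausdorff; your version trades that filter-base verification for the standard extension theorem, and correctly isolates where essentiality (killing $\ker f$) and minimality of $G$ (upgrading the continuous injection to an embedding) enter. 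The argument works for non-abelian $G$ as required, since $\ker f$ and $N$ are normal throughout.
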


The celebrated Prodanov-Stoyanov Theorem states that every minimal abelian 
group is precompact (cf.~\cite{ProdStoj} and~\cite{ProdStoj2}), and  
allows for a complete characterization of minimality of abelian groups 
using the notion of an essential subgroup.

\begin{ftheorem}[{\cite[2.5.2]{DikProSto}}, {\cite[3.31]{GLCLTG}}]
\label{thm:min-ab}
An abelian topological group $G$ is minimal if and only if its completion 
$\widetilde G$ is compact and
$G \leq_e  \widetilde G$.
\end{ftheorem}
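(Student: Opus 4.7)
The plan is to derive this characterization by combining the two ingredients that the surrounding text emphasizes: the Prodanov--Stoyanov theorem (every minimal abelian group is precompact) and the Banaschewski/Stephenson/Prodanov criterion formulated as Theorem~\ref{thm:min-crit} (for a dense subgroup $D$ of a topological group $H$, $D$ is minimal iff $H$ is minimal and $D\leq_e H$). Since $G$ is always dense in its completion $\widetilde G$, the statement should fall out by specialising Theorem~\ref{thm:min-crit} to the pair $(H,D)=(\widetilde G,G)$, provided one can guarantee that $\widetilde G$ is compact (which is where abelianness enters via Prodanov--Stoyanov) and minimal (which is automatic for compact Hausdorff groups).

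For the forward direction I assume $G$ is an abelian minimal topological group. The first step is to invoke the Prodanov--Stoyanov theorem to conclude that $G$ is precompact, so that the Raikov completion $\widetilde G$ is compact Hausdorff. Since every compact Hausdorff topological group is minimal (any strictly coarser Hausdorff group topology would give a continuous non-homeomorphic bijection from a compact space onto a Hausdorff space), $\widetilde G$ is minimal. Now $G$ is a dense minimal subgroup of the minimal group $\widetilde G$, so Theorem~\ref{thm:min-crit} applied to the pair $(\widetilde G,G)$ yields $G\leq_e \widetilde G$, which is what is required.

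For the backward direction I assume $\widetilde G$ is compact and $G\leq_e \widetilde G$. As above, compactness forces $\widetilde G$ to be minimal. Since $G$ is dense in $\widetilde G$, the ``if'' half of Theorem~\ref{thm:min-crit} (now read in the direction: $\widetilde G$ minimal and $G\leq_e\widetilde G$ imply $G$ minimal) delivers that $G$ itself is minimal. Note that this half of the argument is purely topological and does not use the abelian hypothesis; the hypothesis is only consumed in the forward direction through Prodanov--Stoyanov.

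The main obstacle, which the excerpt sidesteps by citation, is the Prodanov--Stoyanov theorem itself: without it one cannot ensure that $\widetilde G$ is compact, and the whole ``subgroup of a compact group'' picture collapses. Everything else in the argument is a two-line specialisation of Theorem~\ref{thm:min-crit} together with the elementary observation that compact Hausdorff groups are minimal; consequently no abelianness is used beyond the single appeal to Prodanov--Stoyanov in the forward implication.
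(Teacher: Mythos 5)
Your proposal is correct: the paper states this result as a cited fact without proof, but the derivation you give---Prodanov--Stoyanov to get compactness of $\widetilde G$ in the forward direction, the observation that compact Hausdorff groups are minimal, and then Theorem~\ref{thm:min-crit} applied to the dense pair $(\widetilde G, G)$ in both directions---is exactly the standard argument the surrounding text alludes to when it says Prodanov--Stoyanov ``allows for a complete characterization of minimality of abelian groups using the notion of an essential subgroup.'' Your remark that abelianness is consumed only through the Prodanov--Stoyanov appeal in the forward implication is also accurate.
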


The next two easy lemmas, whose proofs have been omitted, describe
elementary properties of the essentiality relation.

\begin{lemma} \label{ess:lemma:gen-dense}
Let $G$ be a topological group, and $E$ and $H$ subgroups of $G$ such that
$E \subseteq H$.

\begin{myalphlist}

\item
If $E   \leq_e   H$ and
$H   \leq_e   G$, then 
$E   \leq_e   G$.

\item
If $H$ is dense in $G$, then
$E   \leq_e   G$ if and only if
$E   \leq_e   H$ and  
$H   \leq_e   G$.    \qed

\end{myalphlist}
\end{lemma}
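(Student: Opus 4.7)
The plan is to unwind the definition of essentiality directly in both parts, noting that the only non-routine point is the forward direction of (b), where density of $H$ in $G$ is genuinely used.

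For part (a), I would start from an arbitrary non-trivial closed normal subgroup $N$ of $G$ and manufacture a non-trivial element of $E\cap N$ by ``relaying'' essentiality through~$H$. First, $H\cap N$ is closed in~$H$, and for $h\in H$ and $x\in H\cap N$ one has $hxh^{-1}\in H$ (since $h,x\in H$) and $hxh^{-1}\in N$ (since $N$ is normal in $G$), so $H\cap N$ is normal in~$H$; it is non-trivial because $H\leq_e G$. Now apply $E\leq_e H$ to $H\cap N$: this yields a non-trivial element of $E\cap(H\cap N)$, which equals $E\cap N$ because $E\subseteq H$.

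For part (b), the ``if'' direction is (a). For the ``only if'' direction, $H\leq_e G$ is immediate: for any non-trivial closed normal $N\subseteq G$, the inclusion $E\cap N\subseteq H\cap N$ combined with $E\cap N\neq\{0\}$ forces $H\cap N\neq\{0\}$. The substantive half is $E\leq_e H$. Given a non-trivial closed normal subgroup $M$ of~$H$, I would pass to its closure $\overline{M}$ in~$G$. The key step is to verify that $\overline{M}$ is normal in~$G$: for $m\in M$ and $g\in G$, the density of $H$ in $G$ supplies a net $h_\alpha\in H$ with $h_\alpha\to g$, and then normality of $M$ in $H$ gives $h_\alpha m h_\alpha^{-1}\in M$, while joint continuity of conjugation gives $h_\alpha m h_\alpha^{-1}\to gmg^{-1}$, so $gmg^{-1}\in\overline{M}$; a second approximation of an arbitrary element of $\overline{M}$ by elements of $M$ then upgrades this to $g\overline{M}g^{-1}\subseteq\overline{M}$. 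Since $\overline{M}$ is a non-trivial closed normal subgroup of~$G$, the hypothesis $E\leq_e G$ provides a non-trivial element in $E\cap\overline{M}$. But $E\subseteq H$, so $E\cap\overline{M}\subseteq H\cap\overline{M}=M$ (the last equality because $M$ is closed in $H$), whence $E\cap\overline{M}=E\cap M$ is non-trivial.

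The main obstacle is the normality of $\overline{M}$ in~$G$: this is precisely where density of $H$ enters, and it is what makes the forward direction of (b) more than a formal inclusion chase. Everything else in the argument reduces to checking set-theoretic identities of the form $E\cap(H\cap N)=E\cap N$ and $H\cap\overline{M}=M$, made legal by $E\subseteq H$ and the closedness of the relevant subgroups.
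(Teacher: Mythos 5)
Your proof is correct. The paper omits the proof of this lemma as routine (``the next two easy lemmas, whose proofs have been omitted''), and your argument is exactly the standard one the authors intend: relaying a closed normal $N\leq G$ through $H\cap N$ for part (a), and for the forward direction of (b) passing from a closed normal $M\leq H$ to its closure in $G$, whose normality is secured by density of $H$ and continuity of conjugation, with $H\cap\operatorname{cl}_G M=M$ closing the loop.
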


\begin{lemma} \label{ess:lemma:ab-cl}
Let $G$ be an abelian topological group, and $E$ and $H$ subgroups. 

\begin{myalphlist}

\item
If $E   \subseteq   H$, then
$E   \leq_e   G$ if and only if
$E   \leq_e   H$ and
$H   \leq_e   G$.

\item
If $H$ is closed in $G$ and 
$E \leq_e G$, then
$E   \cap   H    
\leq_e   H$. \qed

\end{myalphlist}
\end{lemma}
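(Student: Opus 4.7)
The plan is to verify each implication (three in~(a), one in~(b)) by unpacking the definition of essential subgroup; everything reduces to picking the right closed subgroup to ``test'' against. In~(a), the backward direction is a direct instance of Lemma~\ref{ess:lemma:gen-dense}(a), which does not even use the abelian hypothesis: given a non-trivial closed subgroup $N$ of $G$, the condition $H \leq_e G$ makes $H \cap N$ a non-trivial closed subgroup of $H$, and then $E \leq_e H$ yields $E \cap N = E \cap (H \cap N)$ non-trivial.

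For the forward direction of~(a), assume $E \leq_e G$. The inclusion $E \cap N \subseteq H \cap N$ immediately gives $H \leq_e G$. To show $E \leq_e H$, let $N$ be a non-trivial closed subgroup of $H$ and consider its closure $\overline{N}$ in $G$. Because $G$ is abelian, every subgroup is normal, so $\overline{N}$ is a non-trivial closed normal subgroup of $G$, and $E \leq_e G$ supplies a non-trivial element of $E \cap \overline{N}$. Since $N$ is closed in $H$ we have $N = H \cap \overline{N}$, and since $E \subseteq H$ we get $E \cap \overline{N} = E \cap H \cap \overline{N} = E \cap N$, which is therefore non-trivial, as required.

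Part~(b) is even more direct: if $N$ is a non-trivial closed subgroup of $H$, then (since $H$ is closed in $G$) $N$ is also closed in $G$, so $E \leq_e G$ gives that $E \cap N$ is non-trivial, and $N \subseteq H$ yields $E \cap N = (E \cap H) \cap N$.

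The only subtle point — and the reason the abelian hypothesis enters in~(a) at all — is the normality of $\overline{N}$: in a general topological group, $N \trianglelefteq H$ need not imply $\overline{N} \trianglelefteq G$, which is precisely why the more general Lemma~\ref{ess:lemma:gen-dense}(b) has to assume density of $H$ in $G$. Beyond this observation, the argument involves no real calculation and I do not expect any step to present an obstacle.
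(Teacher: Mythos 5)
Your proof is correct: the paper explicitly omits the proof of this lemma as "easy," and your argument — unpacking the definition of essentiality, testing against $\operatorname{cl}_G N$ via the identity $N = H \cap \operatorname{cl}_G N$ for $N$ closed in $H$, and using that all subgroups of an abelian group are normal — is precisely the standard argument the authors intend. Your closing observation correctly identifies where the abelian hypothesis is used and why Lemma~\ref{ess:lemma:gen-dense}(b) instead assumes density.
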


\begin{remark} \label{ess:rem:ab-sub}
Every closed subgroup of a minimal abelian group is minimal
(cf.~\cite[Proposition~2.3]{Prod2}). Indeed,
let $G$ be a~minimal abelian group, and $M$ a closed subgroup. Then,
by Theorem~\ref{thm:min-ab}, $\widetilde G$ is compact and
$G   \leq_e   \widetilde G$.
So, by  Lemma~\ref{ess:lemma:ab-cl}(b),
$M  =   G   \cap   
(\operatorname{cl}_{\widetilde G} M) \leq_e
 \operatorname{cl}_{\widetilde G} M   =
  \widetilde M $. Thus, by 
Theorem~\ref{thm:min-ab}, $M$ is minimal. (In fact, every closed central 
subgroup of a minimal group is minimal; cf.~\cite[7.2.5]{DikProSto} 
and~\cite[3.26]{GLCLTG}.) It follows that every closed subgroup of a 
totally minimal abelian group is totally minimal, and in fact, every 
closed central subgroup of a totally minimal group is totally minimal 
(cf.~\cite[Lemma~3.1]{Prod2}, 
\cite[7.2.5]{DikProSto}, and \cite[3.27]{GLCLTG}).
 \end{remark}

We turn now to preservation of essentiality under formation of 
products.
A topological group $G$ is  {\itshape elementwise compact} if 
every $g  \in   G$ is contained in a 
compact subgroup of $G$, or equivalently, 
$\operatorname{cl}_G \langle g \rangle$ is compact for 
every $g  \in   G$ 
(cf.~\cite[5.4]{HofMor2}). Stephenson showed that every elementwise 
compact minimal group is perfectly minimal (cf.~\cite[9]{Steph2}). The 
next lemma is a natural extension of Stephenson's result.

\begin{lemma} \label{ess:lemma:ewcomp-prod}
Let $G$ be a topological group, and
$E  \leq_e   G$. 
If $L$ is an elementwise compact group, then
$L \times E \leq_e  L \times G$.
\end{lemma}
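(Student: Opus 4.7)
The plan is to verify the definition of essentiality directly: given a non-trivial closed normal subgroup $N$ of $L \times G$, I would produce a non-trivial element of $(L \times E) \cap N$. I would begin by choosing any non-trivial $(l,g) \in N$ and splitting the argument based on whether $g = e_G$. If $g = e_G$, then necessarily $l \neq e_L$, and the element $(l, e_G)$ already lies in $(L \times E) \cap N$, so nothing remains to be done. The substantive case is $g \neq e_G$.

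In that case, the idea is to cut $N$ down to a subgroup whose first coordinate is forced into a compact group, so that projecting onto $G$ becomes a closed map. Setting $L' = \operatorname{cl}_L \langle l \rangle$ and $N' = N \cap (L' \times G)$, the elementwise compactness of $L$ guarantees that $L'$ is a compact subgroup of $L$, so $N'$ is a closed normal subgroup of $L' \times G$ that still contains $(l,g)$. Because $L'$ is compact, the projection $\pi_G \colon L' \times G \to G$ is a closed map, and hence $M := \pi_G(N')$ is a closed subgroup of $G$.

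Next I would check that $M$ is normal in $G$: for any $h \in G$ and $(l', m) \in N'$, the conjugate $(e_L, h)(l', m)(e_L, h)^{-1} = (l', h m h^{-1})$ lies in $N$ by normality of $N$ in $L \times G$, and its first coordinate remains in $L'$, so it lies in $N'$; projecting gives $h m h^{-1} \in M$. Since $g \in M$ and $g \neq e_G$, $M$ is non-trivial. Applying the hypothesis $E \leq_e G$ to this closed normal subgroup $M$ yields a non-trivial $e' \in E \cap M$; lifting it through $\pi_G$ produces some $l' \in L'$ with $(l', e') \in N' \subseteq N$, and this element of $(L \times E) \cap N$ is non-trivial because $e' \neq e_G$.

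The main obstacle I anticipate is the simultaneous verification that $M$ is closed and normal in $G$; closedness hinges entirely on replacing $L$ by the compact subgroup $L'$ so that the projection onto $G$ is a closed map, while normality relies on conjugation by $\{e_L\} \times G$ preserving the first coordinate of every element of $N'$. Once $M$ is established as a closed normal subgroup of $G$, the essentiality hypothesis $E \leq_e G$ carries out the rest of the proof without further difficulty.
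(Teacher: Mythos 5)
Your proposal is correct and follows essentially the same route as the paper's proof: reduce to a compact first factor via elementwise compactness, use that the projection from a product with compact factor is a closed map, and apply essentiality of $E$ to the projected closed normal subgroup. The only cosmetic differences are that you merge the paper's two steps into one and verify the normality of the projected subgroup explicitly rather than citing that surjective homomorphisms send normal subgroups to normal subgroups.
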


\begin{proof}
Let $\pi_2 \colon 
L \times G 
\rightarrow G$ denote the canonical 
projection, and let  $N$ be a non-trivial closed normal subgroup of
$L \times G$. 
If $\pi_2(N)$ is trivial,
then $N  \subseteq   
L \times   \{e\}$, and consequently
$N  \cap  
(L\times E)$ is non-trivial.
Thus, we may assume that $\pi_2(N)$ is non-trivial.
We prove the statement in two steps.

{\itshape Step 1.} Suppose that $L$ is compact.
Then $\pi_2$ is a closed map (cf.~\cite[3.1.16]{Engel6}), and so
$\pi_2(N)$ is a~closed normal subgroup of $G$. Thus,
$\pi_2(N) \cap  E$ is non-trivial, because 
$E  \leq_e   G$. Therefore, 
the intersection $N  \cap  
(L \times E)$ is non-trivial, as 
required.

{\itshape Step 2.} In the general case, let 
$x =  (l,g)  \in 
  N$ 
be such that 
$g  \neq  e$. Since $L$ is elementwise 
compact, there is a compact subgroup $S$ of $L$ such that
$l   \in  S$. Put
$N^\prime :=   N 
 \cap   
(S \times  G)$. Then $N^\prime$ is a closed 
normal subgroup of 
$S\times  G$,
and it is non-trivial, because 
$x \in N^\prime$. By what we 
have shown so far,
$N^\prime \cap  
(S \times  E)$ is non-trivial, and in 
particular, 
$N  \cap  
(L \times  E)$ is non-trivial, as 
desired.
\end{proof}

The next example shows that elementwise compactness cannot be replaced 
with precompactness, minimality, or completeness in 
Lemma~\ref{ess:lemma:ewcomp-prod}.

\begin{example}
Let $p$ be a prime, $\mathbb{Z}_p$ the group of $p$-adic integers, and 
$(\mathbb{Z},\tau_p)$ the integers equipped with the $p$-adic topology.
Since $(\mathbb{Z},\tau_p)$ is a minimal group whose completion is 
$\mathbb{Z}_p$ (cf.~\cite[2.5.6]{DikProSto}), by Theorem~\ref{thm:min-ab}, 
$\mathbb{Z}  \leq_e   \mathbb{Z}_p$. 
However,
$\mathbb{Z} \times \mathbb{Z}$ is not 
essential in 
$(\mathbb{Z},\tau_p) \times \mathbb{Z}_p$
or $\mathbb{Z} \times \mathbb{Z}_p$ 
(in the latter,  the first component is equipped with the discrete 
topology).
Indeed,  if $\xi   \in   
\mathbb{Z}_p\backslash\mathbb{Z}$, then 
$F :=  \langle (1,\xi)\rangle$ is a 
non-trivial closed subgroup of 
$(\mathbb{Z},\tau_p) \times \mathbb{Z}_p$
(and thus of 
$\mathbb{Z} \times \mathbb{Z}_p$)
such that 
$F \cap  
(\mathbb{Z} \times \mathbb{Z})$ is trivial.
\end{example}

It is also worth noting that preservation of essentiality does not imply 
precompactness, minimality, nor completeness, as the next example 
demonstrates.

\begin{example}
Let $p$ be a prime. Let $L_1$ denote the direct sum
$\mathbb{Z}_p^{(\omega)}$ equipped with the subgroup topology induced by
the direct product $\mathbb{Z}_p^{\omega}$, let
$L_2$ denote the direct sum $(\mathbb{R}/\mathbb{Z})^{(\omega)}$ 
equipped with the box topology, and put 
$L  :=  
L_1  \times  L_2$.
Then $L$ is elementwise compact, because elementwise compactness is 
preserved under formation of products, coproducts, sums, and 
$\Sigma$-products. Thus, by 
Lemma~\ref{ess:lemma:ewcomp-prod},  
$L$ {\itshape preserves essentiality}, that is, 
$E  \leq_e   G$ implies
$L \times E
 \leq_e  
L \times G$.
Nevertheless, $L$ is neither complete nor precompact, and in 
particular, by Theorem~\ref{thm:min-ab}, $L$ is not minimal. 
\end{example}

These examples provide a natural motivation for the following problem.

\begin{problem} \label{ess:prob:char}
Characterize the topological groups $L$ with the property that 
\begin{myalphlist}

\item
for every topological group $G$, if
$E  \leq_e   G$, then
$L \times E
 \leq_e 
L \times G$;

\item
for every abelian topological group $G$, if
$E  \leq_e   G$, then
$L \times E
 \leq_e 
L \times G$;

\item
for every topological group $G$, if  
$E  \leq_e   G$ and $E$ is 
minimal, then
$L \times E
 \leq_e 
L \times G$.

\end{myalphlist}
\end{problem}

We provide an answer to Problem~\ref{ess:prob:char}(c) in the special 
case where $L$ is minimal and abelian.

\begin{lemma} \label{ess:lemma:prodMin}
Let $M$ be a minimal abelian group, $G$ a topological group, and
$E  \leq_e   G$. If
$M  \times   E$ is minimal, then:

\begin{myalphlist}

\item
$M \times E \leq_e \widetilde M \times E$;

\item
$M \times E \leq_e \widetilde M \times G$;

\item
$M \times E \leq_e M \times G$.

\end{myalphlist}
\end{lemma}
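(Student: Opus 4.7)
The plan is to unpack the hypothesis using Theorem~\ref{thm:min-ab} and then derive the three essentialities in the order (a), (b), (c), each step leveraging the previous one together with the preparatory lemmas of this section.

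First, since $M$ is a minimal abelian group, Theorem~\ref{thm:min-ab} yields that $\widetilde{M}$ is compact and $M \leq_e \widetilde{M}$; in particular $M$ is dense in $\widetilde{M}$. For (a), I would note that density of $M$ in $\widetilde{M}$ implies that $M \times E$ is dense in $\widetilde{M} \times E$. Since $M \times E$ is minimal by hypothesis, Theorem~\ref{thm:min-crit} applied to this dense inclusion delivers at once that $\widetilde{M} \times E$ is minimal and that $M \times E \leq_e \widetilde{M} \times E$, which is (a).

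For (b), the key observation is that $\widetilde{M}$ is compact, hence elementwise compact. Lemma~\ref{ess:lemma:ewcomp-prod} applied with $L := \widetilde{M}$ therefore upgrades the hypothesis $E \leq_e G$ to $\widetilde{M} \times E \leq_e \widetilde{M} \times G$. Composing this with (a) via the transitivity in Lemma~\ref{ess:lemma:gen-dense}(a), along the chain $M \times E \subseteq \widetilde{M} \times E \subseteq \widetilde{M} \times G$, yields $M \times E \leq_e \widetilde{M} \times G$, which is (b).

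For (c), rather than argue directly, I would exploit the biconditional in Lemma~\ref{ess:lemma:gen-dense}(b). In the chain $M \times E \subseteq M \times G \subseteq \widetilde{M} \times G$, the middle subgroup $M \times G$ is dense in $\widetilde{M} \times G$ (again by density of $M$ in $\widetilde{M}$), so that lemma gives
\[
M \times E \leq_e \widetilde{M} \times G \iff \bigl( M \times E \leq_e M \times G \ \text{and}\ M \times G \leq_e \widetilde{M} \times G \bigr).
\]
Part (b) provides the left-hand side, so both conjuncts on the right must hold, yielding (c) as a free corollary.

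No step requires a genuinely hard calculation; the whole argument is essentially a careful routing through Theorems~\ref{thm:min-ab} and~\ref{thm:min-crit} together with Lemmas~\ref{ess:lemma:gen-dense} and~\ref{ess:lemma:ewcomp-prod}. The only mildly clever point is recognizing that (c) is extracted from (b) via the \emph{iff} in Lemma~\ref{ess:lemma:gen-dense}(b), rather than attempting to prove it from scratch.
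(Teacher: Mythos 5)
Your proposal is correct and takes essentially the same route as the paper: parts (b) and (c) coincide with the paper's argument step for step. The only (harmless) difference is in (a), where the paper first applies Theorem~\ref{thm:min-crit} to the inclusion $M \times E \subseteq \widetilde M \times \widetilde E$ and then descends to $\widetilde M \times E$ via Lemma~\ref{ess:lemma:gen-dense}(b), whereas you apply Theorem~\ref{thm:min-crit} directly to the dense inclusion $M \times E \subseteq \widetilde M \times E$ --- a slightly shorter but equally valid use of the same criterion.
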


\begin{proof}
(a) By Theorem~\ref{thm:min-crit}, 
$M  \times   E 
 \leq_e 
\widetilde M   \times   \widetilde E$, because
$M  \times   E$
is minimal. Consequently, by Lemma~\ref{ess:lemma:gen-dense}(b),
$M \times E
 \leq_e 
\widetilde M \times E$,
since $\widetilde M \times E$ is dense 
in $\widetilde M   \times   \widetilde E$.

(b) By Theorem~\ref{thm:min-ab}, $\widetilde M$ is compact, and so, by 
Lemma~\ref{ess:lemma:ewcomp-prod}, 
$\widetilde M \times E
 \leq_e 
\widetilde M \times G$. 
By Lemma~\ref{ess:lemma:gen-dense}(a), combining this with what has been 
shown in part (a) yields
$M \times E
 \leq_e 
\widetilde M \times G$.

(c) As $M \times G$ is dense in 
$\widetilde M \times G$,
by Lemma~\ref{ess:lemma:gen-dense}(b), part (b) implies that
$M \times E
 \leq_e 
M \times G$.
\end{proof}

\begin{theorem}  \label{ess:thm:perfmin}
Let $M$ be a minimal abelian group. The following 
statements are equivalent:

\begin{myromanlist}

\item
$M$ is perfectly minimal;

\item
for every topological group $G$, if
$E  \leq_e   G$ and $E$ is
minimal, then
$M \times E
 \leq_e 
M \times G$;

\item
for every abelian topological group $G$, if
$E  \leq_e   G$ and $E$ is
minimal, then
$M \times E
 \leq_e 
M \times G$;

\item
for every minimal abelian group $M^\prime$, the product
$M \times M^\prime$ is minimal.

\end{myromanlist}
\end{theorem}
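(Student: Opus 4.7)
My plan is to prove the cycle $(i) \Rightarrow (ii) \Rightarrow (iii) \Rightarrow (iv) \Rightarrow (i)$. The first three implications are straightforward consequences of the machinery already in place. For $(i) \Rightarrow (ii)$: if $M$ is perfectly minimal and $E \leq_e G$ with $E$ minimal, then $M \times E$ is minimal by the very definition of perfect minimality, and Lemma~\ref{ess:lemma:prodMin}(c) yields $M \times E \leq_e M \times G$. The implication $(ii) \Rightarrow (iii)$ is a mere restriction of the quantifier to abelian groups. For $(iii) \Rightarrow (iv)$, given minimal abelian $M'$, Theorem~\ref{thm:min-ab} provides that $\widetilde{M'}$ is compact and $M' \leq_e \widetilde{M'}$; applying $(iii)$ to $G = \widetilde{M'}$ and $E = M'$ gives $M \times M' \leq_e M \times \widetilde{M'}$, which combined with $M \times \widetilde{M'} \leq_e \widetilde{M} \times \widetilde{M'}$ from Lemma~\ref{ess:lemma:ewcomp-prod} (using compactness of $\widetilde{M'}$ and $M \leq_e \widetilde{M}$) and chained via Lemma~\ref{ess:lemma:gen-dense}(a) produces $M \times M' \leq_e \widetilde{M} \times \widetilde{M'}$. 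A second application of Theorem~\ref{thm:min-ab} then delivers the minimality of $M \times M'$.

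The delicate step is $(iv) \Rightarrow (i)$. Let $H$ be an arbitrary minimal group. By Theorem~\ref{thm:min-crit}, $\widetilde{H}$ is minimal and $H \leq_e \widetilde{H}$. Since $\widetilde{M}$ is compact it is perfectly minimal by Stephenson's theorem, so $\widetilde{M} \times \widetilde{H}$ is minimal; this being the completion of $M \times H$, Theorem~\ref{thm:min-crit} reduces the minimality of $M \times H$ to the essentiality $M \times H \leq_e \widetilde{M} \times \widetilde{H}$. Lemma~\ref{ess:lemma:ewcomp-prod} (with $L = \widetilde{M}$ compact and $H \leq_e \widetilde{H}$) immediately supplies the outer step $\widetilde{M} \times H \leq_e \widetilde{M} \times \widetilde{H}$, so by Lemma~\ref{ess:lemma:gen-dense}(a) everything hinges on the inner essentiality $M \times H \leq_e \widetilde{M} \times H$.

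The inner essentiality is the main obstacle, since Lemma~\ref{ess:lemma:ewcomp-prod} does not apply in the form needed: the unchanged factor $H$ need not be elementwise compact. My plan is to pick a non-trivial closed normal subgroup $N$ of $\widetilde{M} \times H$ and analyze it via the projection $\pi_2$, which is closed by compactness of $\widetilde{M}$. When $N \cap (\widetilde{M} \times \{e\})$ is non-trivial, it is a non-trivial closed subgroup of $\widetilde{M}$ and hence meets $M$ non-trivially by $M \leq_e \widetilde{M}$, producing the desired element of $N \cap (M \times H)$. When $N \cap (\widetilde{M} \times \{e\})$ is trivial, the map $\pi_2|_N$ is a continuous bijection onto the closed subgroup $\pi_2(N) \leq H$ and, because $\widetilde{M}$ is compact, a homeomorphism; thus $N$ is the graph of a continuous homomorphism $\phi\colon \pi_2(N) \to \widetilde{M}$, and $\pi_2(N)$ -- being embedded via $\phi$ into the abelian $\widetilde{M}$ -- is itself an abelian closed normal subgroup of $H$. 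Here hypothesis $(iv)$ must be invoked, together with $M \leq_e \widetilde{M}$, to force a non-trivial element of $\phi^{-1}(M)$; extracting a genuine element of the image $\phi(\pi_2(N))$ meeting $M$, rather than only of its closure $\overline{\phi(\pi_2(N))}$, is the technical crux that closes the theorem.
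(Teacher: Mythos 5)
Your chain (i)$\Rightarrow$(ii)$\Rightarrow$(iii)$\Rightarrow$(iv) is correct and is exactly the paper's argument: Lemma~\ref{ess:lemma:prodMin}(c) for the first step, and for (iii)$\Rightarrow$(iv) the same combination of (iii) applied to $E=M^\prime$, $G=\widetilde{M^\prime}$, Lemma~\ref{ess:lemma:ewcomp-prod}, Lemma~\ref{ess:lemma:gen-dense}(a), and Theorem~\ref{thm:min-ab}. The genuine gap is (iv)$\Rightarrow$(i), which you do not prove. After reducing to the inner essentiality $M\times H\leq_e \widetilde M\times H$ and identifying a closed normal $N$ with trivial $N\cap(\widetilde M\times\{e\})$ as the graph of a continuous homomorphism $\phi\colon\pi_2(N)\to\widetilde M$, two things go wrong. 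First, your claim that $\pi_2(N)$ is abelian because it is ``embedded via $\phi$'' is false: $\phi$ need not be injective (its kernel is $\{h : (e,h)\in N\}$); when $\ker\phi$ is non-trivial the conclusion is in fact easy, since any non-trivial $(e,h)$ with $h\in\ker\phi$ already lies in $N\cap(M\times H)$, but you do not make that case split. Second, and decisively, in the injective case you merely assert that (iv) ``must be invoked'' to produce a non-trivial element of $\phi^{-1}(M)$, and you concede that passing from the closure $\operatorname{cl}_{\widetilde M}\phi(\pi_2(N))$ (which meets $M$ by essentiality) to the image $\phi(\pi_2(N))$ itself is ``the technical crux.'' That crux is the entire content of the implication and is left open; moreover it is unclear how (iv) could enter, since it concerns products with \emph{minimal abelian} groups, while $\pi_2(N)$ is only a closed normal (not central) subgroup of a possibly non-abelian minimal $H$ and has no evident minimality property.

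The paper avoids this difficulty entirely: it obtains (i)$\Leftrightarrow$(iv) as an immediate consequence of Stoyanov's theorem that a topological group $G$ is perfectly minimal if and only if $G\times(\mathbb{Z},\tau_p)$ is minimal for every prime $p$. Since each $(\mathbb{Z},\tau_p)$ is minimal and abelian, (iv) supplies exactly the hypothesis of Stoyanov's criterion. The obstacle you hit is precisely why a deep external theorem is cited here; to complete your direct approach you would essentially have to reprove Stoyanov's result. I recommend either citing it, as the paper does, or explicitly flagging the remaining step as unproved.
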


\begin{proof}
The implication (i) $\Rightarrow$ (ii) follows by 
Lemma~\ref{ess:lemma:prodMin}(c), and the implication 
(ii) $\Rightarrow$ (iii) is trivial. 
The equivalence (i) $\Leftrightarrow$ (iv) is an immediate 
consequence of the following theorem of Stoyanov: 
{\slshape A topological group $G$ 
is perfectly minimal if and only if 
$G \times   (\mathbb{Z},\tau_p)$ is minimal 
for every prime $p$} (cf.~\cite{Stoy}).
We turn now to the remaining implication.

(iii) $\Rightarrow$ (iv): Let $M^\prime$ be a minimal abelian group. By 
Theorem~\ref{thm:min-ab}, $\widetilde M$ and 
$\widetilde{M^\prime}$ are compact, and 
$M  \leq_e   \widetilde{M}$ and
$M^\prime  \leq_e  
\widetilde{M^\prime}$. 
So, by (iii),
$M \times M^\prime
 \leq_e 
M \times \widetilde M^\prime $, 
and by Lemma~\ref{ess:lemma:ewcomp-prod},
$M \times \widetilde M^\prime
 \leq_e 
\widetilde M \times 
\widetilde M^\prime$. Therefore, by 
Lemma~\ref{ess:lemma:gen-dense}(a),
$M \times M^\prime
 \leq_e  
\widetilde M \times
\widetilde M^\prime $. Hence, by
Theorem~\ref{thm:min-ab}, $M \times 
M^\prime$ is minimal.
\end{proof}

\begin{remark} \label{ess:rem:prod}
It follows from Remark~\ref{ess:rem:ab-sub} and 
Theorem~\ref{ess:thm:perfmin} that every closed subgroup of a~perfectly 
minimal abelian group is perfectly minimal. 
\end{remark}

\section{Proof of Theorems~\ref{main:thm:realize} 
and~\ref{main:thm:const}}

\label{sect:proofs}

We prove Theorem~\ref{main:thm:realize} by establishing the following 
more elaborate statement.

\begin{Ltheorem*}[\ref{main:thm:realize}$'$] 
Let $A$ be an essential subgroup of a connected compact abelian group 
$C$. Then there exists an 
abelian group $G$ such that 
\begin{myalphlist}

\item
$G$ is pseudocompact;

\item
$A  \cong q(G)$;

\item 
$C   \cong    q(\widetilde G) = (\widetilde G)_0$,
and in  particular,
$\dim G  =  \dim C$;

\item
$G$ is minimal.

\end{myalphlist}

\vspace{6pt}

\noindent
Furthermore, if $A$ is perfectly minimal, then $G$ may be 
chosen to be perfectly minimal.
\end{Ltheorem*}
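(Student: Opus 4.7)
The plan is to realize the pair $(C, A)$ as a pseudocompact abelian group via Theorem~\ref{prel:thm:D'} and then promote the resulting group to a minimal (respectively perfectly minimal) one by exploiting the essentiality $A \leq_e C$ in place of the density hypothesis of Theorem~\ref{prel:thm:D'}(a) and~(c). Properties (a)--(c) of the statement follow essentially from Theorem~\ref{prel:thm:D'}, so the new content lies in establishing (d).

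We will arrange the construction so that $\widetilde G$ takes the product form $\widetilde G = C \times K$ for some compact zero-dimensional abelian group $K$, and so that $G$ contains the subgroup $A \times K$ (under the identification of $A$ with its copy in $C \cong (\widetilde G)_0$). Simultaneously securing this structural form, the $G_\delta$-density of $G$ in $\widetilde G$, and the intersection constraint $G \cap (C \times \{0\}) = A \times \{0\}$ is the principal technical obstacle; we expect to address it via a careful refinement of the construction underlying Theorem~\ref{prel:thm:D'}, for instance through a $\Sigma$-product or resolution-style argument building $G$ out of $A \times K$ enlarged by an appropriate family of coset representatives.

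With the structural setup in place, minimality of $G$ reduces to showing that $A \times K \leq_e \widetilde G = C \times K$, since $G \supseteq A \times K$ will propagate essentiality upward. Given a non-trivial closed subgroup $N \leq C \times K$, if $\pi_C(N) = 0$ then $N \subseteq \{0\} \times K \subseteq A \times K$ and $N \cap (A \times K) = N \neq 0$; otherwise $\pi_C(N)$ is a non-trivial closed subgroup of $C$, so by $A \leq_e C$ there exists $0 \neq a \in \pi_C(N) \cap A$, and choosing $k \in K$ with $(a, k) \in N$ yields a non-zero element $(a, k) \in N \cap (A \times K)$. Since $G \supseteq A \times K$, we obtain $G \leq_e \widetilde G$, and Theorem~\ref{thm:min-ab} gives that $G$ is minimal.

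For the perfectly minimal case, we will invoke Theorem~\ref{ess:thm:perfmin} to reduce the problem to showing $G \times M \leq_e \widetilde{G \times M}$ for every minimal abelian $M$. Since $A$ is perfectly minimal, Theorem~\ref{ess:thm:perfmin} applied to $A$ yields $A \times M \leq_e A \times \widetilde M$. Lemma~\ref{ess:lemma:ewcomp-prod}, applied with the compact factor $K$, upgrades this to $A \times K \times M \leq_e A \times K \times \widetilde M$; applied with the compact factor $\widetilde M$, it upgrades the previously established $A \times K \leq_e C \times K$ to $A \times K \times \widetilde M \leq_e C \times K \times \widetilde M$. Combining via the transitivity of essentiality (Lemma~\ref{ess:lemma:gen-dense}(a)) gives $A \times K \times M \leq_e C \times K \times \widetilde M = \widetilde{G \times M}$; since $G \times M \supseteq A \times K \times M$, we conclude $G \times M \leq_e \widetilde{G \times M}$, and Theorem~\ref{thm:min-ab} establishes the minimality of $G \times M$, hence the perfect minimality of~$G$.
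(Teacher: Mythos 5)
There is a genuine gap here, and in fact the structural blueprint on which your whole argument rests cannot be realized. You postulate $\widetilde G = C \times K$ with $K$ compact zero\mbox{-}dimensional, $G \supseteq A \times K$, and $G \cap (C \times \{0\}) = A \times \{0\}$. These requirements are incompatible unless $A$ is already $G_\delta$-dense in $C$: since $\{0\} \times K \subseteq A \times K \subseteq G$, for any $(c,k) \in G$ you may subtract $(0,k) \in G$ to obtain $(c,0) \in G$, so $G \cap (C \times \{0\}) = \pi_C(G) \times \{0\}$; on the other hand, $G$ must be $G_\delta$-dense in $C \times K$ by Theorem~\ref{prel:thm:lcps}, which forces $\pi_C(G)$ to be $G_\delta$-dense in $C$ (it meets $U$ whenever $U\times K$ is a non-empty $G_\delta$-set). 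Hence $q(G)$ would be dense in $q(\widetilde G)$ --- exactly the situation the theorem is designed to avoid, e.g., for $A=\mathbb{Z}^\perp$ in $\widehat{\mathbb{Q}}$. The paper's construction therefore only secures $G \supseteq H \times A$, where $H$ is a dense, pseudocompact, but \emph{non-compact} subgroup of the compact zero-dimensional factor $\widetilde H$: concretely, $G = \Gamma_h + (\{0\} \times A)$ for a surjection $h \colon \widetilde H \to C$ with $G_\delta$-dense kernel containing $H$ (Lemmas~\ref{lemma:kernel} and~\ref{lemma:graph}). This also means that the construction you defer as ``the principal technical obstacle'' is not a refinement to be supplied later; it is the heart of the proof and must actually be carried out.

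The collapse of the product structure also invalidates your essentiality argument, which hinges on the first factor of $A \times K$ being compact so that the projection onto $C$ is a closed map. With $H$ in place of $K$ that projection argument fails, and one needs the subtler Lemma~\ref{ess:lemma:prodMin}: one first checks that $H \times A$ is \emph{minimal} (this is where $A \leq_e C$ enters, giving $A \leq_e \widetilde A$ and hence minimality of $A$ by Theorem~\ref{thm:min-ab}, while $H$ is chosen perfectly minimal), deduces $H \times A \leq_e \widetilde H \times \widetilde A$ from Theorem~\ref{thm:min-crit}, and only then transfers essentiality up to $\widetilde H \times C = \widetilde G$. Your treatment of the perfectly minimal case inherits the same defect, since it again factors through $A \times K \leq_e C \times K$ and $G \supseteq A \times K$. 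The purely formal reductions you employ (essentiality passing to larger subgroups via Lemma~\ref{ess:lemma:ab-cl}(a), transitivity via Lemma~\ref{ess:lemma:gen-dense}(a), and the criterion of Theorem~\ref{ess:thm:perfmin}) are sound, but they are applied to a configuration that does not exist.
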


We first show how Theorem~\ref{main:thm:realize} follows from 
Theorem~\ref{main:thm:realize}$'$.

\begin{proof}[Proof of Theorem~\ref{main:thm:realize}.] Sufficiency of the  
conditions in (a) and (b) follows from Theorem~\ref{main:thm:realize}$'$, 
while sufficiency of (c) was already shown in 
\cite[Theorem~D$'$]{DikGL3}. Thus, we may turn to the necessity of the 
conditions.

(a) Let $G$ be a minimal pseudocompact abelian group. Then, by 
Theorem~\ref{thm:min-ab}, $G \leq_e  \widetilde  G$, and thus, by 
Lemma~\ref{ess:lemma:ab-cl}, 
$G   \cap   q(\widetilde G)  \leq_e  q(\widetilde G)$. 
By Theorem~\ref{prel:thm:connsum},
$q(G)= q(\widetilde G)  \cap  G$, and therefore
$q(G) \leq_e  q(\widetilde G)$.

(b) If $G$ is a perfectly minimal pseudocompact abelian group, then by 
Remark~\ref{ess:rem:prod}, $q(G)$ is also perfectly minimal, and by part 
(a), $q(G) \leq_e  q(\widetilde G)$.

(c) Let $G$ be a (perfectly) totally minimal pseudocompact abelian group. 
Then $G/q(G)$ is minimal, and by Theorem~\ref{prel:thm:qG}, $q(G)$ is 
dense in $q(\widetilde G)$. It follows from Remarks~\ref{ess:rem:ab-sub} 
and~\ref{ess:rem:prod} that $q(G)$ is (perfectly) totally minimal, being 
a closed subgroup of a (perfectly) totally minimal abelian group.
\end{proof}

We turn now to the proof of Theorem~\ref{main:thm:realize}$'$, and to that 
end, we recall two technical lemmas.

\begin{flemma}[{\cite[5.2]{DikGL3}}] \label{lemma:kernel}
For every infinite cardinal $\lambda$, there exists a 
pseudocompact zero-dimensional group $H$ such that:

\begin{myromanlist}

\item
$H$ is perfectly totally minimal;

\item
$r_0(\widetilde H / H)   \geq 
  2^\lambda$.
\end{myromanlist}

\end{flemma}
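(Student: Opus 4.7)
The plan is to construct $H$ as a dense subgroup of a suitable compact zero-dimensional abelian group $K$ (which will then coincide with $\widetilde H$), chosen so that $K$ has enough torsion-free room to accommodate $r_0(K/H) \geq 2^\lambda$. A natural candidate is $K := \mathbb{Z}_p^{2^\lambda}$ for a fixed prime $p$: this group is compact, zero-dimensional, abelian, and its torsion-free rank equals $2^{2^\lambda}$.

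The three required properties of $H$ translate into three conditions on how $H$ sits inside $K$. Pseudocompactness of $H$ is equivalent, by Theorem~\ref{prel:thm:lcps}, to $G_\delta$-density in $K$. Total minimality of the abelian group $H$ is equivalent, via Theorem~\ref{thm:min-ab} applied to quotients of $H$, to $H$ being \emph{totally dense} in $K$, meaning $H \cap L$ is dense in $L$ for every closed subgroup $L$ of $K$. Perfect total minimality would follow from a Stoyanov-type criterion for totally minimal products parallel to the one cited in the proof of Theorem~\ref{ess:thm:perfmin}, combined with the compactness of $K$, so that essentiality propagates through products with $(\mathbb{Z},\tau_p)$ by Lemma~\ref{ess:lemma:ewcomp-prod}. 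Granted these reductions, I would build $H$ by transfinite recursion of length $2^{2^\lambda}$: at stage $\alpha$, enumerate a nontrivial closed $G_\delta$-set $U_\alpha$ and a nontrivial closed subgroup $L_\alpha$ of $K$, and adjoin to the growing group an element of $U_\alpha$ together with an appropriately chosen element of $L_\alpha$ that, combined with the earlier choices, is dense in $L_\alpha$, while preserving the $\mathbb{Z}$-independence modulo $H$ of a fixed reserved family $(e_\xi)_{\xi < 2^\lambda} \subseteq K$ of torsion-free elements.

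The principal obstacle is the bookkeeping in this recursion: at every stage one must verify that $U_\alpha$ and the relevant coset of $L_\alpha$ each contain a valid representative, i.e.\ one whose adjunction does not create a $\mathbb{Z}$-linear dependence among the $e_\xi$'s modulo the currently-built $H$. The cardinality count is favourable, since every nontrivial closed $G_\delta$-subset of $K$ has cardinality $2^{2^\lambda}$, while the set of forbidden elements at stage $\alpha$ has cardinality at most $|\alpha| + 2^\lambda < 2^{2^\lambda}$; handling closed subgroups of small cardinality requires placing the family $(e_\xi)$ in sufficiently general position at the outset, which can be arranged in advance. Once this bookkeeping is completed, $H$ is $G_\delta$-dense and totally dense in $K$, and the residues of the $e_\xi$'s witness $r_0(K/H) \geq 2^\lambda$; items (i) and (ii) of the lemma then follow from the construction.
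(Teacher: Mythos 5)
The paper does not actually prove this lemma---it is imported verbatim from \cite[5.2]{DikGL3}---so your attempt can only be measured against the statement itself. Measured that way, it fails before the bookkeeping even starts: with $K:=\mathbb{Z}_p^{2^\lambda}$ for a \emph{single} prime $p$, conditions (i) and (ii) are mutually exclusive. Indeed, suppose $H$ is a dense subgroup of $K$ that is perfectly minimal (total minimality is not even needed). Since $(\mathbb{Z},\tau_p)$ is minimal, the product $H\times(\mathbb{Z},\tau_p)$ is minimal, and its completion is $K\times\mathbb{Z}_p$; hence $H\times\mathbb{Z}\leq_e K\times\mathbb{Z}_p$ by Theorem~\ref{thm:min-ab}. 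Now fix $a\in K$ and put $N_a:=\{(ca,c)\mid c\in\mathbb{Z}_p\}$, where $ca$ is computed in the natural $\mathbb{Z}_p$-module structure of $K$. This is a non-trivial compact (hence closed) subgroup of $K\times\mathbb{Z}_p$, so it must meet $H\times\mathbb{Z}$ non-trivially; a non-zero element of the intersection has the form $(ca,c)$ with $c$ a non-zero integer and $ca\in H$. Thus every element of $K/H$ is torsion and $r_0(\widetilde H/H)=0$, not $\geq 2^\lambda$. (This is the same phenomenon as in the Example following Lemma~\ref{ess:lemma:ewcomp-prod}, which shows in particular that $(\mathbb{Z},\tau_p)$ is totally minimal but not perfectly minimal.) The obstruction forces the completion of $H$ to involve infinitely many primes---compare the paper's own reliance on $\mathbb{Z}^\perp\cong\prod_{p}\mathbb{Z}_p$, rather than a single $\mathbb{Z}_p$, in the proof of Theorem~\ref{main:thm:const}.

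Even after replacing $K$ by, say, $\bigl(\prod_{p}\mathbb{Z}_p\bigr)^{2^\lambda}$, two of your steps would still need genuine arguments. First, total density cannot be secured by adjoining one element of each closed subgroup $L_\alpha$ per stage; you must adjoin a dense subgroup of each $L_\alpha$, and when $L_\alpha$ is metrizable it has only $\mathfrak{c}$ elements, which may be far smaller than the set of constraints already accumulated, so ``placing the $e_\xi$ in general position'' is not a routine cardinality count. Second, the passage from total minimality to \emph{perfect} total minimality is exactly where the content lies: total minimality does not imply perfect minimality, so Stoyanov's criterion must actually be verified for the constructed $H$, and, as the argument above shows, that verification is precisely what rules out your choice of $K$. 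I would recommend either citing \cite[5.2]{DikGL3} as the paper does, or rebuilding the construction around a compact group involving all primes and carrying out the Stoyanov check explicitly.
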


\begin{flemma}[{\cite[5.3]{DikGL3}}] \label{lemma:graph}
Let $K_1$ and $K_2$ be compact topological groups, and let
$h\colon K_1  \rightarrow  K_2$ be 
a~surjective homomorphism such that $\ker h$ is $G_\delta$-dense in $K_1$. 
Then the graph $\Gamma_h$ of $h$ is a $G_\delta\mbox{-}$dense 
subgroup of the product $K_1  \times  K_2$,
and in particular, $\Gamma_h$ is pseudocompact.
\end{flemma}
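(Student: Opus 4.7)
The goal is to show that the subgroup $\Gamma_h = \{(x,h(x)) : x \in K_1\}$ meets every non-empty $G_\delta$ subset of $K_1 \times K_2$; the pseudocompactness of $\Gamma_h$ will then follow immediately from Theorem~\ref{prel:thm:lcps}.

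The plan is as follows. Fix a non-empty $G_\delta$ set $D \subseteq K_1 \times K_2$ and a point $(a,b) \in D$. First, I would invoke the standard structure result for compact groups that every identity neighborhood contains a closed normal subgroup with metrizable quotient. Applied to a sequence of open neighborhoods witnessing $D$ as a $G_\delta$ (after translating by $(a,b)^{-1}$), this produces a closed normal $G_\delta$ subgroup $N$ of $K_1 \times K_2$ such that $(a,b)N \subseteq D$ and $(K_1 \times K_2)/N$ is compact metrizable.

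Next, I would replace $N$ by a product contained in it. Define
\[
N_1 := \{k \in K_1 : (k,e) \in N\}, \qquad N_2 := \{k \in K_2 : (e,k) \in N\}.
\]
Each $N_i$ is a closed normal $G_\delta$ subgroup of $K_i$: the subgroup and $G_\delta$ properties come from pulling back $N$ through the continuous embeddings $k \mapsto (k,e)$ and $k \mapsto (e,k)$, and normality uses normality of $N$ in $K_1 \times K_2$. Clearly $N_1 \times N_2 \subseteq N$, so it suffices to find $x \in K_1$ with $(x,h(x)) \in (a,b) \cdot (N_1 \times N_2)$, i.e., to simultaneously arrange $x \in aN_1$ and $h(x) \in bN_2$.

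The key step uses the hypothesis that $\ker h$ is $G_\delta$-dense in $K_1$: for every $g \in K_1$, the coset $gN_1$ is a non-empty $G_\delta$ subset of $K_1$, hence meets $\ker h$. This gives $K_1 = \ker h \cdot N_1$, and combined with surjectivity of $h$ yields $h(N_1) = h(\ker h)\cdot h(N_1) = h(K_1) = K_2$. Consequently I can choose $n_1 \in N_1$ with $h(n_1) \in h(a)^{-1} b \cdot N_2$, and set $x := a n_1^{-1}$; a direct computation using that $h$ is a homomorphism shows $(x,h(x)) \in (a,b)(N_1 \times N_2) \subseteq (a,b)N \subseteq D$, establishing $\Gamma_h \cap D \neq \emptyset$.

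The main obstacle I anticipate is the passage from a general closed normal $G_\delta$ subgroup $N$ to a product $N_1 \times N_2$ sitting inside it, since arbitrary closed $G_\delta$ subgroups of a direct product need not split. The kernel-style definition of the $N_i$ circumvents this by shrinking $N$ rather than decomposing it; the containment $N_1 \times N_2 \subseteq N$ is all that is needed, because once $h(N_1) = K_2$ is available, the first coordinate supplies enough freedom to hit any prescribed coset in the second coordinate.
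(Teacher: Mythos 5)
The paper does not actually prove Lemma~\ref{lemma:graph}; it is quoted verbatim from the authors' earlier work \cite[5.3]{DikGL3}, so there is no in-paper argument to compare against. Judged on its own, your proof is correct and complete, and it follows the natural route one would expect the cited source to take: reduce an arbitrary non-empty $G_\delta$ set to a coset $(a,b)N$ of a closed normal $G_\delta$ subgroup $N$ of $K_1\times K_2$ (using that every identity neighborhood of a compact group contains a closed normal subgroup with metrizable quotient), shrink $N$ to the product $N_1\times N_2$ of its ``coordinate kernels,'' and then exploit the $G_\delta$-density of $\ker h$ to get $K_1=\ker h\cdot N_1$ and hence $h(N_1)=K_2$, which supplies the needed freedom in the second coordinate. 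The only blemish is the final choice $x:=an_1^{-1}$: in a non-abelian group this gives $h(x)=h(a)h(n_1)^{-1}$, which need not lie in $bN_2$ when $h(n_1)\in h(a)^{-1}bN_2$. Taking $x:=an_1$ instead yields $h(x)=h(a)h(n_1)\in bN_2$ directly, so this is a one-character fix rather than a gap.
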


\begin{proof}[Proof of Theorem~\ref{main:thm:realize}$'$.]
Put $\lambda   =   w(C)$, and let $H$ be 
the 
group provided by Lemma~\ref{lemma:kernel}. Since
$r_0(\widetilde H / H)   \geq   
2^\lambda$,
the quotient $\widetilde H / H$ contains a free abelian group 
$F$ of rank $2^\lambda$. 
As $|C|  \leq  2^{\lambda}$, one may
pick a surjective homomorphism $h_1\colon F \rightarrow 
C$.
The group $C$ is divisible, because it is compact and connected 
(cf.~\cite[24.25]{HewRos}). Thus, $h_1$ can be extended to a surjective 
homomorphism
 $h_2\colon \widetilde H / H \rightarrow C$. 

Let $h\colon \widetilde H \rightarrow C$ denote 
the composition of $h_2$ with the canonical projection
$\widetilde H  \rightarrow \widetilde H / H$.
By Theorem~\ref{prel:thm:lcps}, $H$ is $G_\delta$-dense in 
$\widetilde H$, 
because $H$ is pseudocompact. Thus, $\ker h$ is $G_\delta$-dense in 
$\widetilde H$, because  
$H   \subseteq   \ker h$. Clearly, $h$ is 
surjective. Therefore, by Lemma~\ref{lemma:graph}, the graph
$\Gamma_h$ of $h$ is $G_\delta\mbox{-}$dense in the product
$\widetilde H \times  C$.

Put $G :=   \Gamma_h + (\{0\}  \times  A)$. Since $\Gamma_h$ is 
$G_\delta$-dense in $\widetilde H \times  C$ and  contained in $G$, the 
group $G$ is 
$G_\delta\mbox{-}$dense too.  Thus,  
$\widetilde G = \widetilde H \times  C$, and  by 
Theorem~\ref{prel:thm:lcps}, $G$ is pseudocompact. As $H$ is zero-dimensional, 
$q(\widetilde G)=(\widetilde G)_0   =   \{0\}   \times   C$, and by 
Theorem~\ref{prel:thm:connsum}(a), 
$q(G)  =   q(\widetilde G)  \cap  G  = \{0\}   \times   A$. 

We check now that $\dim G  = 
 \dim C$. Since $G$ is pseudocompact, by 
Theorem~\ref{prel:thm:lcps}, 
$\widetilde G   =   \beta 
G$,~and so 
$\dim G  =  \dim \beta G  
  =  \dim \widetilde G$ 
(cf.~\cite[7.1.17]{Engel6}).
As $H$ is zero-dimensional and pseudocompact, by 
Theorem~\ref{prel:thm:connsum}(b), 
$\dim\widetilde  H   =  0$.
Thus, by Yamanoshita's Theorem, 
$\dim \widetilde G  = 
\dim \widetilde H  +  \dim C 
 =   \dim C$
(cf.~\cite{Yamanoshita}, \cite[Corollary~2]{MostertDim},
and~\cite[3.3.12]{DikProSto}). Therefore,  
$\dim G  = \dim C$.

We turn now to minimality properties of $G$.
The group $G$ always  contains the product
$H \times  A$.
Since $C$ is compact, so is 
$\widetilde A   = 
\operatorname{cl}_C A$, and by Lemma~\ref{ess:lemma:ab-cl}(a),
$A\leq_e  \widetilde A$, 
because $A\leq_e  C$. Thus, by 
Theorem~\ref{thm:min-ab}, $A$ is minimal, and consequently,
$H \times  A$ is minimal, as $H$ is 
perfectly minimal. Therefore, by Lemma~\ref{ess:lemma:prodMin}(b),
$H \times  A
\leq_e 
\widetilde H \times  C 
  =   \widetilde G$. Hence,
by Lemma~\ref{ess:lemma:ab-cl}(a),
$G \leq_e  \widetilde G$,
and by Theorem~\ref{thm:min-ab}, $G$~is minimal.

Suppose now that $A$ is perfectly minimal, and let 
$M^\prime$ be a minimal 
abelian group. Then 
$H \times  A$ is perfectly minimal, 
and so 
$H \times  A   
\times M^\prime$ is minimal. Thus, by
Lemma~\ref{ess:lemma:prodMin}(b),
$H \times A  
\times  M^\prime  \leq_e 
\widetilde G   \times   
\widetilde{M^\prime}$.
Therefore, by Lemma~\ref{ess:lemma:ab-cl}(a),
$G  
\times  M^\prime  \leq_e \widetilde G   \times      
\widetilde{M^\prime}$, because
$G  \times  M^\prime$ contains
$H \times  A  
\times  M^\prime$. Hence, by 
Theorem~\ref{thm:min-ab}, $G  \times  
M^\prime$ is minimal. 
Consequently, by Theorem~\ref{ess:thm:perfmin},
$G$ is perfectly minimal, as desired. 
\end{proof}

In preparation for the proof of Theorem~\ref{main:thm:const}, 
we recall some 
remarkable properties of the Pontryagin dual of the discrete group of 
the rationals.
Let $\mathbb{T}  :=  \mathbb{R}/\mathbb{Z}$, 
and put $\widehat{\mathbb{Q}}   := 
\hom(\mathbb{Q},\mathbb{T})$, 
equipped with the topology of pointwise convergence. The group
$\widehat{\mathbb{Q}}$ is compact, and  it is connected, because 
$\mathbb{Q}$ is torsion-free (cf.~\cite[3.3.8]{DikProSto}). 

\begin{flemma} {\rm (\cite[3.6.2, 3.6.5]{DikProSto})}
\label{lemma:Zann-ess}
Let $\mathbb{Z}^\perp  := 
\{\chi  \in   \widehat{\mathbb{Q}} \mid
\chi(\mathbb{Z}) = 0\}$ denote the annihilator
 of $\mathbb{Z}$ in $\widehat{\mathbb{Q}}$.

\begin{myalphlist}

\item
$\mathbb{Z}^\perp$ is a compact essential subgroup 
of $\widehat{\mathbb{Q}}$;

\item
$\mathbb{Z}^\perp   \cong   
\prod\limits_{p\in \mathbb{P}} \mathbb{Z}_p$, and in 
particular, 
$\dim \mathbb{Z}^\perp  =   0$;

\item
$\widehat{\mathbb{Q}}/\mathbb{Z}^\perp   \cong 
 \mathbb{T}$, and in particular,
$\dim \widehat{\mathbb{Q}}  =   1$.

\end{myalphlist}
\end{flemma}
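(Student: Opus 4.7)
The plan is to deduce all three parts from Pontryagin duality applied to the short exact sequence
$$0 \to \mathbb{Z} \to \mathbb{Q} \to \mathbb{Q}/\mathbb{Z} \to 0$$
of discrete abelian groups. Dualizing yields a short exact sequence of compact groups whose middle term is $\widehat{\mathbb{Q}}$, whose quotient is $\widehat{\mathbb{Z}} \cong \mathbb{T}$, and whose kernel $\widehat{\mathbb{Q}/\mathbb{Z}}$ can be identified with the subgroup of characters vanishing on $\mathbb{Z}$, that is, with $\mathbb{Z}^\perp$. The quotient identification gives $\widehat{\mathbb{Q}}/\mathbb{Z}^\perp \cong \mathbb{T}$, proving (c), while the kernel identification combined with the standard decomposition $\mathbb{Q}/\mathbb{Z} \cong \bigoplus_p \mathbb{Z}(p^\infty)$ and the well-known $\widehat{\mathbb{Z}(p^\infty)} \cong \mathbb{Z}_p$ yields $\mathbb{Z}^\perp \cong \prod_p \mathbb{Z}_p$, giving the first half of (b). Compactness of $\mathbb{Z}^\perp$ is automatic, since it is the kernel of a continuous homomorphism into the Hausdorff group $\mathbb{T}$.

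For the dimension assertions, I would observe that $\prod_p \mathbb{Z}_p$ is a product of profinite groups, hence compact and zero-dimensional, so $\dim \mathbb{Z}^\perp = 0$. The value $\dim \widehat{\mathbb{Q}} = 1$ then follows from Yamanoshita's Theorem (cited elsewhere in the paper) applied to the exact sequence above: $\dim \widehat{\mathbb{Q}} = \dim \mathbb{Z}^\perp + \dim \mathbb{T} = 0 + 1 = 1$.

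For part (a), I would use the Galois correspondence between closed subgroups of $\widehat{\mathbb{Q}}$ and subgroups of $\mathbb{Q}$ provided by annihilators. Any non-trivial closed subgroup of $\widehat{\mathbb{Q}}$ has the form $H^\perp$ for a proper subgroup $H \subsetneq \mathbb{Q}$, and
$$\mathbb{Z}^\perp \cap H^\perp \;=\; (\mathbb{Z}+H)^\perp,$$
which is trivial if and only if $\mathbb{Z} + H = \mathbb{Q}$. Thus essentiality reduces to showing that no proper subgroup $H$ of $\mathbb{Q}$ satisfies $\mathbb{Z} + H = \mathbb{Q}$. This is a short algebraic observation: such an $H$ would give $\mathbb{Q}/H \cong \mathbb{Z}/(\mathbb{Z} \cap H)$, so $\mathbb{Q}/H$ would be simultaneously divisible (as a quotient of the divisible group $\mathbb{Q}$) and cyclic, forcing $\mathbb{Q}/H = 0$ and hence $H = \mathbb{Q}$.

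The only real obstacle is bookkeeping through the duality: once the annihilator correspondence is in place, each assertion becomes a one-line verification, and the identification $\widehat{\mathbb{Q}/\mathbb{Z}} \cong \prod_p \mathbb{Z}_p$ may safely be taken as standard.
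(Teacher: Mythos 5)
The paper offers no proof of this lemma: it is quoted as a known fact with a citation to \cite[3.6.2, 3.6.5]{DikProSto}, so there is no in-paper argument to compare against. Your argument is correct and complete, and it is the standard one: dualizing $0 \to \mathbb{Z} \to \mathbb{Q} \to \mathbb{Q}/\mathbb{Z} \to 0$ identifies $\mathbb{Z}^\perp$ with $\widehat{\mathbb{Q}/\mathbb{Z}} \cong \prod_p \mathbb{Z}_p$ and $\widehat{\mathbb{Q}}/\mathbb{Z}^\perp$ with $\widehat{\mathbb{Z}} \cong \mathbb{T}$, the dimension count follows from Yamanoshita's theorem, and your reduction of essentiality to the non-existence of a proper subgroup $H \le \mathbb{Q}$ with $\mathbb{Z} + H = \mathbb{Q}$ (ruled out because $\mathbb{Q}/H$ would be both divisible and cyclic) is exactly right.
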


\begin{proof}[Proof of Theorem~\ref{main:thm:const}.]
Suppose that $n  < \omega$. Put
$C_n :=   (\widehat{\mathbb{Q}})^n$ and 
$A_n :=  (\mathbb{Z}^\perp)^n $.
By Lemma~\ref{lemma:Zann-ess}, $\mathbb{Z}^\perp$ is a compact essential 
subgroup of $\widehat{\mathbb{Q}}$. Thus, by 
applying Lemmas~\ref{ess:lemma:ewcomp-prod} 
and~\ref{ess:lemma:gen-dense}(a) repeatedly, one obtains that 
$A_n$ is a compact essential subgroup of $C_n$. 
It follows from  Lemma~\ref{lemma:Zann-ess} 
and Yamanoshita's Theorem that 
$\dim C_n  =  n$ (cf.~\cite{Yamanoshita}, \cite[Corollary~2]{MostertDim},
and~\cite[3.3.12]{DikProSto}). By 
Theorem~\ref{main:thm:realize}$'$, there exists a perfectly minimal 
pseudocompact 
group $G_n$ such that $A_n   \cong   q(G_n)$,
$C_n   \cong   q(\widetilde{G}_n)$, and 
$\dim G_n  =  \dim C_n  =  n$.  
Clearly, $q(G_n)$ is not only not dense, but in fact
it is closed in $q(\widetilde{G}_n)$.

For $n =\omega$, put
$G_\omega  =   G_1 
  \times   \mathbb{T}^\omega $. Then
$\dim G_\omega   \geq  \omega$, and
$q(G_\omega)  =   
q(G_1)  \times   \mathbb{T}^\omega \cong 
  \mathbb{Z}^\perp  \times  \mathbb{T}^\omega $, while
\begin{align*}
q(\widetilde G_\omega)=
(\widetilde G_\omega)_0 = (\widetilde G_1 \times \mathbb{T}^\omega)_0 = 
(\widetilde G_1)_0 \times \mathbb{T}^\omega \cong 
\widehat{\mathbb{Q}} \times \mathbb{T}^\omega.
\end{align*}
Thus, $q(G_\omega)$ is not dense in $q(\widetilde G_\omega)$, as 
required. The last statement follows from Theorem~\ref{prel:thm:qG}.
\end{proof}

\section*{Acknowledgements}

We are grateful to Karen Kipper for her kind help in proofreading this 
paper for grammar and punctuation. We wish to thank the anonymous referee 
for the constructive comments that led to an improved presentation of this 
paper.

{\footnotesize

\bibliography{notes,notes2,notes3}
}

\begin{samepage}

\bigskip
\noindent
\begin{tabular}{l @{\hspace{1.8cm}} l}
Department of Mathematics and Computer Science & Halifax, Nova Scotia\\
University of Udine & Canada\\
Via delle Scienze, 208 -- Loc. Rizzi, 33100 Udine &  \\
Italy &  \\ 
& \\
\em email: dikranja@dimi.uniud.it  &
\em email: lukacs@topgroups.ca
\end{tabular}

\end{samepage}

\end{document}